\renewcommand{\@tufte@reset@par}{%
  \setlength{\RaggedRightParindent}{0pc}
  \setlength{\JustifyingParindent}{0pc}
  \setlength{\parindent}{0pc}
  \setlength{\parskip}{0pt}%
}
\def\into {\hookrightarrow}
\def\cH {\mathcal{H}}
\def\RR{\mathbb{R}} 
\def\NN{\mathbb{N}}
\def\ZZ{\mathbb{Z}}
\def\QQ{\mathbb{Q}}
\def\tors{\mathrm{tors}}
\DeclareMathOperator*{\colim}{colim}
\DeclareMathOperator{\Hom}{Hom} 
\DeclareMathOperator{\id}{id} 
\DeclareMathOperator{\coker}{coker}
\DeclareMathOperator{\Aut}{Aut} 
\DeclareMathOperator{\im}{im}
\DeclareMathOperator{\pr}{pr}
\DeclareMathOperator{\Spin}{Spin} 
\DeclareMathOperator{\Ext}{Ext}
\newtheorem{thm}{Theorem}[section]
\newtheorem{prop}[thm]{Proposition}
\newtheorem{cor}[thm]{Corollary}
\newtheorem{lemma}[thm]{Lemma}
\newtheorem{defn}[thm]{Definition}
\newtheorem{rem}[thm]{Remark}
\numberwithin{equation}{section}
\newtheorem{example}[thm]{Example}
\title{Many finite-dimensional lifting bundle gerbes are torsion.\thanks{This document is released under a Creative Commons Zero license \href{http://creativecommons.org/publicdomain/zero/1.0/}{\texttt{creativecommons.org/publicdomain/zero/1.0/}}}
}
\author{David Michael Roberts\thanks{\href{https://orcid.org/0000-0002-3478-0522}{orcid.org/0000-0002-3478-0522}
}}
\date{April 26, 2021} 
\begin{document}

\maketitle

\begin{abstract}
Many bundle gerbes constructed in practice are either infinite-dimensional, or finite-dimensional but built using submersions that are far from being fibre bundles.
Murray and Stevenson proved that gerbes on simply-connected manifolds, built from finite-dimensional fibre bundles with connected fibres, always have a torsion $DD$-class.
In this note I prove an analogous result for a wide class of gerbes built from principal bundles, relaxing the requirements on the fundamental group of the base and the connected components of the fibre, allowing both to be nontrivial.
This has consequences for possible models for basic gerbes, the classification of crossed modules of finite-dimensional Lie groups, the coefficient Lie-2-algebras for higher gauge theory on principal 2-bundles, and finite-dimensional twists of topological $K$-theory.
\end{abstract}

\section{Introduction}

A bundle gerbe \cite{Mur} is a geometric object that sits over a given space or manifold $X$ classified by elements of $H^3(X,\ZZ)$, in the same way that (complex) line bundles on $X$ are classified by elements of $H^2(X,\ZZ)$.
And, just as line bundles on manifolds have connections giving rise to \emph{curvature}, a 2-form, giving a class in $H^3_{dR}(X)$, bundle gerbes have a notion of geometric `connection' data, with curvature a 3-form and hence a class in $H^3_{dR}(X)$.
Since de~Rham cohomology sees only the non-torsion part of integral cohomology, bundle gerbes that are classified by torsion classes in $H^3$ are thus trickier in one sense to `see' geometrically.
The problem is compounded by the fact that bundle gerbes with the same 3-class may look wildly different, as the correct notion of equivalence is much coarser than isomorphism.
Thus different constructions that lead to the same class are still of interest, due to the flexibility this introduces.

One wide class of bundle gerbes---so-called \emph{lifting bundle gerbes}---arise from the following data. Given a Lie group $G$, a principal $G$-bundle $P\to X$, and a central extension 
\begin{equation}\label{eq:centralext}
U(1)\to \widehat{G} \to G
\end{equation} 
of Lie groups, there is a bundle gerbe on $X$ that is precisely the obstruction to the extension of a $\widehat{G}$-bundle lifting $P$. 
One can see the cohomology class corresponding to the bundle gerbe as analogous to the class $w_2\in H^2(M,\ZZ/2)$ obstructing the lifting of the frame bundle $F(M)$ to be a spin bundle.

For a non-trivial example, one can consider the central extension $U(1) \to U(n) \to PU(n)$, and $P$ a principal $PU(n)$-bundle. 
The lifting bundle gerbe associated to such a $PU(n)$-bundle has torsion class in $H^3(X,\ZZ)$.
Conversely, by a result of Serre, and published by Grothendieck \cite[I, Th\'eor\`eme 1.6]{Groth_Br_123}, every torsion class in $H^3(X,\ZZ)$ is associated to at least one lifting bundle gerbe of this form.\footnote{It is in fact nontrivial to find which ranks $n$ this is possible for, given a specific torsion element}

In the other direction, one can consider the extension $U(1) \to U(\cH) \to PU(\cH)$ of infinite-dimensional groups, where $\cH\simeq L^2[0,1]$, and lifting bundle gerbes of principal $PU(\cH)$-bundles. 
These bundle gerbes are infinite-dimensional, and \emph{every} class in $H^3(X,\ZZ)$ can be realised by \emph{some} lifting bundle gerbe of this form.
There are also constructions of bundle gerbes on compact, simply-connected, simple Lie groups $G$ that use lifting bundle gerbes for the infinite-dimensional Kac--Moody central extensions $U(1) \to \widehat{\Omega_kG} \to \Omega G$, and these are also non-torsion gerbes.

Between these examples, then, one might wonder when an a priori \emph{given} finite-dimensional lifting bundle gerbe on $X$ has a torsion class in $H^3(X,\ZZ)$.
That is, given a finite-dimensional central extension \eqref{eq:centralext} and a principal $G$-bundle $P\to X$, is the lifting bundle gerbe classified by a torsion class?
The answer is: not always, as there are explicit and easy examples where it is non-torsion, for instance over $X=S^2\times S^1$ (cf Example~\ref{ex:cup_product} below).
However, under a mild condition on $X$, it \emph{is} true.

Murray's original paper had a claim about a sufficient condition for a finite-dimensional bundle gerbe to be torsion, though the proof had a subtle error.
Murray and Stevenson later \cite{Murray_Stevenson_11} gave a correct proof of a slightly stronger result, demanding (i) simple-connectivity of the base manifold $X$, and (ii) that the fibres of a certain submersion $Y\to X$ (part of the bundle gerbe data) are connected.
This result was not specifically about lifting bundle gerbes, but it suffices to prove that lifting gerbes for $G$-bundles on simply-connected spaces $X$, where $G$ is a  \emph{connected} Lie group, are torsion.

However, the proof in \emph{loc.\ cit.}\ is more general, and doesn't use anything specific about lifting bundle gerbes, which are somewhat more rigid than the general case.
In this note I shall prove the following result:

\begin{thm}\label{thm:main_thm_easy}
Given any connected manifold $X$ with finite fundamental group, any central extension \eqref{eq:centralext} of finite-dimensional Lie groups, and any principal $G$-bundle $P\to X$, the corresponding lifting bundle gerbe is torsion.
\end{thm}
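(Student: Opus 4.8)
The plan is to reduce, in two stages, to the known vanishing of the degree-three rational cohomology of the classifying space of a connected Lie group. Throughout I will use the standard description of the class of a lifting bundle gerbe: if $f\colon X\to BG$ classifies $P$, then the Dixmier--Douady class of its lifting gerbe is $f^*c$, where $c\in H^3(BG,\ZZ)$ classifies the extension \eqref{eq:centralext}, i.e.\ $c$ is the obstruction to delooping $U(1)\to\widehat{G}\to G$ to a fibration $BU(1)\to B\widehat{G}\to BG$. Since a class in $H^3(X,\ZZ)$ is torsion if and only if its image in $H^3(X,\QQ)$ vanishes, it suffices to prove that $f^*c$ is rationally trivial.

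First I would eliminate the fundamental group by a transfer argument. Because $\pi_1(X)$ is finite, the universal cover $\pi\colon\widetilde{X}\to X$ is a finite covering of degree $d=|\pi_1(X)|$, so there is a transfer $\tau\colon H^3(\widetilde{X},\ZZ)\to H^3(X,\ZZ)$ satisfying $\tau\circ\pi^*=d\cdot\id$. Consequently, if $\pi^*f^*c=(f\circ\pi)^*c$ is torsion in $H^3(\widetilde{X},\ZZ)$, then so is $f^*c$. But $(f\circ\pi)^*c$ is precisely the Dixmier--Douady class of the lifting gerbe of the pulled-back bundle $\pi^*P\to\widetilde{X}$, so it is enough to treat the case of a simply connected base.

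Second I would eliminate the disconnectedness of $G$, which is the new ingredient beyond the connected-fibre hypothesis of Murray--Stevenson. The inclusion $G_0\into G$ of the identity component induces a map $i\colon BG_0\to BG$ which is the universal cover of $BG$, since $\pi_1(BG)\cong\pi_0(G)$ while $BG_0$ is simply connected. As $X$ is now simply connected, $f$ lifts to $\tilde{f}\colon X\to BG_0$ (equivalently, $P$ reduces its structure group to $G_0$), and naturality of the classifying construction gives $f^*c=\tilde{f}^*(i^*c)$. It therefore suffices to show that $i^*c\in H^3(BG_0,\ZZ)$ is itself torsion. This is where the finite-dimensionality enters: $G_0$ is a connected Lie group, hence homotopy equivalent to its maximal compact subgroup $K$, so $BG_0\simeq BK$; and for $K$ compact connected, $H^*(BK,\QQ)$ is a polynomial algebra on generators in even degrees, so $H^3(BG_0,\QQ)\cong H^3(BK,\QQ)=0$. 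Thus $i^*c$ is torsion, proving the theorem for simply connected $X$, and the transfer step upgrades this to every $X$ with finite fundamental group. (Note that this argument recovers the connected case directly, without invoking the more delicate general bundle-gerbe result, reflecting the extra rigidity of lifting gerbes.)

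The step I expect to carry the real content is the passage from $G$ to $G_0$: the hypothesis on $\pi_1(X)$ is used exactly to lift the classifying map along the universal cover $BG_0\to BG$, after which the component group $\pi_0(G)$ disappears and the problem collapses to the purely cohomological vanishing $H^3(BG_0,\QQ)=0$. The points needing care, though I do not anticipate a serious obstacle in either, are checking that the transfer is compatible with the Dixmier--Douady class so that ``torsion upstairs implies torsion downstairs,'' and that $i^*c$ is genuinely the universal class of the extension restricted to $G_0$.
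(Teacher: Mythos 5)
Your proposal is correct and follows essentially the same route as the paper: pass to the finite universal cover and use the transfer to see that torsion upstairs implies torsion downstairs, reduce the structure group to the identity component $G_0$ over the simply connected cover (equivalently, lift the classifying map along $BG_0\to BG$), and conclude from the fact that $H^3(BG_0,\ZZ)$ is torsion since $G_0$ is homotopy equivalent to a compact connected group. The only cosmetic difference is that you phrase the reduction purely via classifying spaces and the universal class of the extension, whereas the paper writes down the explicit subbundle $P'\subset\pi^*P$ and a morphism of lifting bundle gerbes; the content is identical.
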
 

In fact, the proof suffices to give a stronger result, which applies to topological spaces, assuming a group theoretical fact about homomorphisms $\pi_1(X)\to \pi_0(G)$.
We shall give this result below as Theorem~\ref{thm:main_thm_sharp}, and some applications in the final section.

\paragraph{Acknowledgements} 
I thank Konrad Waldorf for asking me whether all finite-dimensional lifting bundle gerbes are torsion, inspiring this note, and Maxime Ramzi for patient, extensive discussions. David Baraglia and MathOverflow users\footnote{\url{https://mathoverflow.net/q/389703}} also provided helpful information.

This work is supported by the Australian Research Council's \emph{Discovery Projects} funding scheme (grant number DP180100383), funded by the Australian Government.

\section{Preliminaries}

We first recall the definition of a bundle gerbe.

\begin{defn}[\cite{Mur}]
A bundle gerbe on a manifold $M$ consists of the following data:
\begin{itemize}
\item A surjective submersion $Y\to M$.
\item A principal $U(1)$-bundle $E\to Y^{[2]}:=Y\times_M Y $.
\item An isomorphism $\mu\colon \pr_{12}^*E\otimes_{U(1)} \pr_{23}^*E \to \pr_{13}^*E$ of $U(1)$-bundles\footnote{The tensor product here is defined, for arbitrary $U(1)$-bundles $E,F$ on $X$, to be $(E\times_X F)/U(1)$, where $U(1)$ acts via the anti-diagonal action.}
 on $Y^{[3]}:=Y\times_MY\times_MY$, called the bundle gerbe multiplication.
\item This isomorphism needs to satisfy an associativity condition, namely that the diagram\footnote{I have suppressed some canonical isomorphisms for clarity; all tensor products are over $U(1)$}
\[
  \xymatrix{
    E_{12}\otimes E_{23}\otimes E_{34} \ar[r]^{\id\times \mu} \ar[d]_{\mu\times \id} & E_{12}\otimes E_{24} \ar[d]^{\mu}\\
    E_{13}\otimes E_{34} \ar[r]_{\mu} & E_{14}
  }
\]
of isomorphisms of $U(1)$-bundles on $Y^{[4]}$ needs to commute. Here $E_{ij} = \pr_{ij}^*E$, with $\pr_{ij}\colon Y^{[4]}\to Y^{[2]}$
\end{itemize}
A bundle gerbe will be denoted by $(E,Y)$, the other data implicit.
A bundle gerbe on a \emph{topological space} is defined the same way, except in that case we require instead that $Y\to M$ admits local sections.
\end{defn}

Here is the only type of example we will need. Fix a locally trivial central extension $U(1)\to \widehat{G} \to G$ of Lie groups.\footnote{That is, ignoring the group structures on $\widehat{G}$ and $G$, it is a principal $U(1)$-bundle} Recall that the multiplication map of $\widehat{G}$ induces an isomorphism $\pr_1^*\widehat{G}\otimes_{U(1)}\pr_2^*\widehat{G} \simeq m^*\widehat{G}$ over $G\times G$ where $m\colon G\times G\to G$ is the multiplication map.

\begin{example}[\cite{Mur}]
Let $P\to X$ be a principal $G$-bundle. 
The lifting bundle gerbe associated to this bundle (and the fixed central extension) is given by the data:
\begin{itemize}
\item The submersion is $P\to X$. Recall that the action map induces an isomorphism $P\times G\simeq P\times_X P$ \emph{over $X$} we will use this silently from now on.
\item The $U(1)$-bundle is $P\times \widehat{G} \to P\times G$.
\item Using the isomorphism $P\times G\times G\to P^{[3]}$, $(p,g,h)\mapsto (p,pg,pgh)$, the multiplication is given by the composite
\[
\pr_{12}^*(P\times \widehat{G})\otimes_{U(1)} \pr_{23}^*(P\times \widehat{G}) \simeq P \times \pr_1^*\widehat{G}\otimes_{U(1)}\pr_2^*\widehat{G} \simeq P\times m^*\widehat{G}\simeq \pr_{13}^*(P\times \widehat{G})
\]
\item The associativity condition follows from associativity in the group $\widehat{G}$.
\end{itemize}
\end{example}

Given a bundle gerbe on a space $X$, and a map $f\colon X'\to X$, there is a bundle gerbe on $X'$ given by the data $f^*Y\to X'$ and $(p^{[2]})^*E$, where $p\colon f^*Y\to Y$ is the projection.
Moreover, the maps 
\[
  (p^{[2]})^*E \to E,\quad (f^*Y)^{[2]} \to Y^{[2]},\quad p\colon f^*Y\to Y
\]
are compatible with all the bundle gerbe structure.
This is the pullback of $(E,Y)$ by $f$, denoted $f^*(E,Y)$.
If $(P\times \widehat{G},P)$ is a lifting bundle gerbe, then $f^*(P\times \widehat{G},P) \simeq (f^*P\times \widehat{G},f^*P)$.

More generally, given bundle gerbes $(F,Z)$ and $(E,Y)$ on $X$ one can take a map $g\colon Z\to Y$ commuting with the projections to $X$, and a map $k\colon F\to E$ of $U(1)$-bundles covering $g^{[2]}$ as in the diagram
\[
  \xymatrix{
    F \ar[d] \ar[rrr]^k &&& E \ar[d]\\
    Z^{[2]} \ar@<+0.5ex>[dr] \ar@<-1.5ex>[dr] \ar[rrr]^{g^{[2]}} &&& Y^{[2]} \ar@<+1.3ex>[dr] \ar@<-0.7ex>[dr]\\
    & Z \ar[rrr]^g \ar[dr] &&& Y \ar[dll]\\
    && X  && 
  }
\]
such that the bundle gerbe multiplications are respected. 
The data of $g$ and $k$ is then a morphism of bundle gerbes, denoted $(k,g)\colon (F,Z)\to (E,Y)$
Note that in this case, we have $F\simeq (g^{[2]})^*E$, as $k$ is a map of principal bundles.
An \emph{isomorphism} $(E,Y)\simeq (F,Z)$ of bundle gerbes on $X$ will be meant in the strictest possible sense, namely where $g$ and $k$ are isomorphisms.

\begin{defn}
Given a bundle gerbe $(E,Y)$, a \emph{stable trivialisation} consists of a principal $U(1)$-bundle $T\to Y$ and an isomorphism $\pr_1^*T^*\times \pr_2T \simeq E$ of bundles on $Y^{[2]}$, making an isomorphism of bundle gerbes along with $\id_Y$.
\end{defn}

\begin{example}
Given a lifting bundle gerbe associated to $P\to X$ and $U(1) \to \widehat{G}\to G$, a stable trivialisation is equivalent data to a principal $\widehat{G}$-bundle $\widehat{P}\to X$ lifting $P$: the quotient map $\widehat{P}\to P$ is the required principal $U(1)$-bundle, and vice versa.
\end{example}

There is a general notion of tensor product of a pair of bundle gerbes, analogous to the tensor product of $U(1)$-bundles, but we will not need it here. 
However, the notion of a power of a single given bundle gerbe is easier to describe, namely if $(E,Y)$ is a bundle gerbe, then $(E,Y)^{\otimes n} := (E^{\otimes n},Y)$ is also a bundle gerbe, using the tensor powers of the $U(1)$-bundle $E$.

\begin{example}\label{ex:powers_of_lifting_gerbes}
Let $P\to X$ and $U(1)\to \widehat{G}\to G$ be the data necessary to build a lifting bundle gerbe.
Then $(P\times \widehat{G},P)^{\otimes n} \simeq (P\times \widehat{G}^{\otimes n},P)$, where $U(1)\to \widehat{G}^{\otimes n} \to G$ is the $n$-fold central product of $\widehat{G}$ with itself, which is the same as the $n$-fold tensor power of the underlying $U(1)$-bundle over $G$.
\end{example}

Associated to each bundle gerbe $(E,Y)$ on $X$ there is a class $DD(E,Y) \in H^3(X,\ZZ)$---the Dixmier--Douady, or $DD$-class---that satisfies the following properties (see \cite{Mur}, and \cite{Mur_Stev_stable_iso} for the last item).
\begin{lemma}\label{lemma:DD-facts}
\begin{enumerate}
\item Given $f\colon X'\to X$, then $f^*DD(E,Y) = DD(f^*(E,Y))$;
\item Given a morphism $(k,g)\colon (F,Z)\to (E,Y)$ of bundle gerbes on $X$, then $DD(F,Z)=DD(E,Y)$;
\item For all integers $n$, $DD((E,Y)^{\otimes n})=n\cdot DD(E,Y)$, where a negative tensor power involves the dual $U(1)$-bundle;
\item We have $DD(E,Y)=0$ if and only if the bundle gerbe $(E,Y)$ has a stable trivialisation;
\item\label{lemma:universal_lifting_gerbe} Given a central extension $U(1)\to \widehat{G}\to G$, classified by $\alpha\in H^3(BG,\ZZ)\simeq H^2(BG,\underline{U(1)})$, and a principal $G$-bundle classified by some map $\chi\colon X\to BG$, the lifting bundle gerbe has class in $H^3(X,\ZZ)$ corresponding to $\chi^*\alpha$.
\end{enumerate}
\end{lemma}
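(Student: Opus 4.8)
The plan is to recall the \v{C}ech cocycle description of the Dixmier--Douady class and then read each property off from it. Choose an open cover $\{U_i\}$ of $X$ fine enough that $Y\to X$ admits local sections $s_i\colon U_i\to Y$ and that each pullback $E_{ij} := (s_i,s_j)^*E$ of $E$ along $(s_i,s_j)\colon U_{ij}\to Y^{[2]}$ is trivialisable; pick trivialising sections $\sigma_{ij}$ of $E_{ij}$. The bundle gerbe multiplication $\mu$ compares $\sigma_{ij}\otimes\sigma_{jk}$ with $\sigma_{ik}$ over $U_{ijk}$, producing functions $g_{ijk}\colon U_{ijk}\to U(1)$, and the associativity condition forces $g$ to be a \v{C}ech $2$-cocycle valued in the sheaf $\underline{U(1)}$. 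Its class in $H^2(X,\underline{U(1)})\simeq H^3(X,\ZZ)$ is $DD(E,Y)$; a separate check (refining covers and altering the $\sigma_{ij}$) shows this is independent of all choices.

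Given this, properties (1)--(3) are bookkeeping. For (1) I would pull the cover, sections, and $\sigma_{ij}$ back along $f$: since $f^*(E,Y)$ is built from $f^*Y$ and $(p^{[2]})^*E$, the sections $x'\mapsto(x',s_i(f(x')))$ of $f^*Y$ over $f^{-1}(U_i)$ produce the evident pullback cocycle, whose class is $f^*DD(E,Y)$. For (2), the relation $F\simeq (g^{[2]})^*E$ lets me take local sections $t_i$ of $Z\to X$, push them into $Y$ via $g$, and transport the $\sigma_{ij}$; the two gerbes then yield the same $U(1)$-cocycle, so their classes agree. For (3), replacing $E$ by $E^{\otimes n}$ replaces each $\sigma_{ij}$ by $\sigma_{ij}^{\otimes n}$ and hence $g_{ijk}$ by $g_{ijk}^n$, whose class is $n\cdot DD(E,Y)$; negative powers use the dual bundle, giving $g_{ijk}^{-1}$.

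Property (4) carries the real content, and is where I expect the main difficulty. If $DD(E,Y)=0$ then $g_{ijk}=(\delta h)_{ijk}=h_{jk}h_{ik}^{-1}h_{ij}$ for some $h_{ij}\colon U_{ij}\to U(1)$ after refining; the task is to reassemble this local data into an honest $U(1)$-bundle $T\to Y$ together with an isomorphism $\pr_1^*T^*\otimes\pr_2^*T\simeq E$ on $Y^{[2]}$. This is a descent argument: the $h_{ij}$ correct the sections $\sigma_{ij}$ so that they glue over $Y$ rather than merely over $X$, and one must check that the glued object is well defined and compatible with $\mu$. The converse---that a stable trivialisation $T$ makes $g$ a coboundary---is the easier direction, since $T$ directly supplies the $h_{ij}$. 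Here I would invoke the stable-isomorphism machinery of \cite{Mur_Stev_stable_iso} rather than rebuild it.

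For property (5) I would argue by universality and naturality. The central extension, viewed as a principal $U(1)$-bundle over $G$ with its compatible multiplicative structure, determines a class $\alpha\in H^3(BG,\ZZ)\simeq H^2(BG,\underline{U(1)})$, and there is a universal lifting bundle gerbe on $BG$---built from a model of $EG\to BG$---whose $DD$-class is $\alpha$ essentially by construction. Since the lifting gerbe of $P\to X$ is the pullback of this universal gerbe along a classifying map $\chi\colon X\to BG$ for $P$, compatibly with the isomorphism $f^*(P\times\widehat{G},P)\simeq(f^*P\times\widehat{G},f^*P)$ recorded above, property (1) gives $DD=\chi^*\alpha$. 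The subtlety is making the universal construction precise when $BG$ is only a space and the groups are finite-dimensional; identifying $\alpha$ with the extension class and checking the pullback compatibility are the points that need care.
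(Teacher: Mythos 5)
Your proposal is correct and follows exactly the standard \v{C}ech-cocycle treatment of the Dixmier--Douady class from \cite{Mur} and \cite{Mur_Stev_stable_iso}, which is precisely what the paper relies on: the lemma is stated there as a collection of known facts with those citations rather than proved. The only place your sketch is thin is the descent step in (4) (building $T\to Y$ from the corrected $\sigma_{ij}$ by pulling $E$ back along $y\mapsto(y,s_i(\pi(y)))$ and gluing), but you correctly identify that as the point of content and appropriately defer to the cited machinery.
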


We shall say a bundle gerbe $(E,Y)$ is \emph{torsion} if $DD(E,Y)$ is a torsion element, and it is bundle gerbes of this form that are the main focus of this note.
The reader can rephrase all of the results of this paper without mentioning gerbes, if desired, using the following easy result.

\begin{cor}
Given a principal $G$-bundle $P\to X$ and a central extension $U(1)\to \widehat{G}\to G$, the associated lifting bundle gerbe $(P\times \widehat{G},P)$ is torsion if an only if there some $n>0$ such that the $G$-bundle $P$ lifts to a principal $\widehat{G}^{\otimes n}$-bundle.
\end{cor}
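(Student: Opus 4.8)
The plan is to chain together the formal properties of the $DD$-class collected in Lemma~\ref{lemma:DD-facts} with the explicit description of tensor powers of lifting gerbes in Example~\ref{ex:powers_of_lifting_gerbes}, so that no new geometric input is needed. By definition the gerbe $(P\times\widehat{G},P)$ is torsion precisely when there is some $n>0$ with $n\cdot DD(P\times\widehat{G},P)=0$ in $H^3(X,\ZZ)$, so the whole argument reduces to translating the vanishing of this multiple into a lifting statement.

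First I would use item~3 of Lemma~\ref{lemma:DD-facts} to rewrite $n\cdot DD(P\times\widehat{G},P)$ as $DD\big((P\times\widehat{G},P)^{\otimes n}\big)$. Next, Example~\ref{ex:powers_of_lifting_gerbes} identifies this tensor power with the lifting gerbe $(P\times\widehat{G}^{\otimes n},P)$ for the $n$-fold central product $U(1)\to\widehat{G}^{\otimes n}\to G$; since that identification is an isomorphism of bundle gerbes, item~2 of Lemma~\ref{lemma:DD-facts} gives
\[
  n\cdot DD(P\times\widehat{G},P) = DD(P\times\widehat{G}^{\otimes n},P).
\]

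I would then apply item~4 of Lemma~\ref{lemma:DD-facts}: the right-hand side vanishes if and only if $(P\times\widehat{G}^{\otimes n},P)$ admits a stable trivialisation. Finally, the earlier Example equating stable trivialisations of a lifting gerbe with lifts of the structure group identifies such a stable trivialisation with a principal $\widehat{G}^{\otimes n}$-bundle $\widehat{P}\to X$ lifting $P$. Reading off the chain of equivalences---$(P\times\widehat{G},P)$ torsion $\iff$ $n\cdot DD(P\times\widehat{G},P)=0$ for some $n>0$ $\iff$ $(P\times\widehat{G}^{\otimes n},P)$ is stably trivial for some $n>0$ $\iff$ $P$ lifts to some principal $\widehat{G}^{\otimes n}$-bundle---gives the statement.

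There is no deep obstacle here: the corollary is essentially a bookkeeping assembly of already-established facts. The one point demanding care is to keep the quantifier on $n$ consistent throughout (torsion requires only \emph{some} positive $n$, matching the ``there is some $n>0$'' of the statement, and $n>0$ loses no generality since $(-n)\cdot x=0$ iff $n\cdot x=0$), and to check that the isomorphism of Example~\ref{ex:powers_of_lifting_gerbes} genuinely licenses invoking item~2 of the lemma for the equality of $DD$-classes; once those are observed, the equivalences compose without further work.
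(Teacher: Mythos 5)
Your argument is correct and is exactly the intended one: the paper offers no explicit proof of this corollary, presenting it as an immediate consequence of Lemma~\ref{lemma:DD-facts} (items 2--4), Example~\ref{ex:powers_of_lifting_gerbes}, and the identification of stable trivialisations of lifting gerbes with lifts of the structure group, which is precisely the chain you assemble. Your attention to the quantifier on $n$ and to the isomorphism licensing the equality of $DD$-classes is appropriate but raises no issues.
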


\section{Main results}

This first, relatively short, proof serves to illustrate the idea of the more complex proof of Theorem~\ref{thm:main_thm_sharp}.

\begin{proof}[Proof of Theorem~\ref{thm:main_thm_easy}]
Let us fix a lifting bundle gerbe $(E,P)$ associated to a principal $G$-bundle $P\to X$ and an extension $U(1)\to \widehat{G}\to G$. 
Let $G_0$ denote the connected component of the identity.
We can induce a principal $\pi_0(G)$-bundle on $X$ by defining $Q := P/G_0$, where we use the fact $\pi_0(G) = G/G_0$.
Let $\pi\colon \tilde{X}\to X$ denote the universal covering space of $X$, and recall that for any covering space on $X$, the pullback to $\tilde{X}$ is trivialisable.
Hence if we form the covering space $\pi^*Q \to \tilde{X}$, it is trivialisable.
From this it follows that the structure group of the $G$-bundle $\tilde{P}:=\pi^*P\to \tilde{X}$ reduces to $G_0$.
Thus we can find a subbundle $P' \subset \tilde{P}$ that on fibres looks like the inclusion $G_0 \into G$.

If we denote by $\widehat{G}_0\subset \widehat{G}$ the preimage of $G_0$, then we can form the lifting bundle gerbe on $\tilde{X}$ associated to $P'\to \tilde{X}$ and the central extension $U(1) \to \widehat{G}_0\to G_0$.
By the construction of a lifting bundle gerbe, we get that there is a morphism of bundle gerbes on $\tilde{X}$:
\[
  \xymatrix@C=2ex{
    P'\times \widehat{G}_0 \ar[d] \ar[rrr] &&& P\times \widehat{G} \ar[d]\\
    P'\times G_0 \ar@<+0.5ex>[dr] \ar@<-1.5ex>[dr] \ar[rrr] &&& P\times G \ar@<+0.5ex>[dr] \ar@<-1.5ex>[dr]\\
    & P' \ar[rrr] &&& P
  }
\]
Thus the DD-class $DD(P')$ of the lifting gerbe of $P'$ equal to $DD(\tilde{P})$.

However, the central extension $U(1) \to \widehat{G}_0\to G_0$ is classified by an element of $\alpha\in H^3(BG_0,\ZZ)$.
This cohomology group is pure torsion \cite[Lemme 26.1]{Borel_53}, where we use the fact that the cohomology of the classifying space of a connected Lie group is isomorphic to the cohomology of the classifying space of the maximal compact subgroup.
More precisely, the torsion subgroup of $H^*(BG_0,\ZZ)$ is exactly the kernel of the restriction map $H^*(BG_0,\ZZ)\to H^*(BT,\ZZ)$ \cite{Feshbach_81}, where $T\subseteq G_0$ is a maximal torus of $G_0$ (and $H^*(BT,\ZZ)$ is torsion-free).
Now $DD(P')$ is the image of $\alpha$ under $\chi^*\colon H^3(BG_0,\ZZ)\to H^3(\tilde{X},\ZZ)$, where $\chi\colon \tilde{X}\to BG_0$ is a classifying map.
Thus the lifting gerbe associated to $P'$ is torsion, hence so is the lifting gerbe (on $\tilde{X}$) associated to $\tilde{P}$.

Now we can apply the following lemma, as $\pi_1(X)$ is finite, and conclude that the lifting gerbe associated to $P$ is also torsion, since $DD(\tilde{P}) = \pi^*DD(P)$.
\end{proof}

\begin{lemma}
Given a $k$-sheeted covering space $\pi\colon Y\to X$, and a class $c\in H^n(X,\ZZ)$, if $\pi^*c\in H^n(Y,\ZZ)$ is torsion, then $c$ is torsion.
\end{lemma}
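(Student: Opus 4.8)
The plan is to exploit the transfer (wrong-way) homomorphism attached to the finite covering $\pi$. For a $k$-sheeted covering there is a homomorphism $\pi_!\colon H^n(Y,\ZZ)\to H^n(X,\ZZ)$, which on the level of singular cochains sends a cochain to the cochain on $X$ whose value on a simplex $\sigma$ is the sum of its values on the $k$ lifts of $\sigma$ through $\pi$. Its characteristic property, which I would recall (or re-derive from this cochain description), is that the composite $\pi_!\circ\pi^*\colon H^n(X,\ZZ)\to H^n(X,\ZZ)$ equals multiplication by $k$.

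Granting this, the argument is immediate. Suppose $\pi^*c$ is torsion, say $m\cdot\pi^*c=0$ for some integer $m>0$. Applying $\pi_!$ and using its additivity together with the identity $\pi_!\pi^*=k\cdot\id$ yields
\[
0=\pi_!\big(m\cdot\pi^*c\big)=m\cdot\pi_!\pi^*c=mk\cdot c.
\]
Since $mk>0$, this shows $c$ is a torsion class, as desired.

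The only real content is thus the existence of the transfer and the relation $\pi_!\pi^*=k\cdot\id$; everything else is formal. This is entirely classical for finite covering spaces, and I would simply cite it. As an alternative that sidesteps constructing $\pi_!$ explicitly, one can argue rationally: a class in $H^n(X,\ZZ)$ is torsion precisely when it dies in $H^n(X,\QQ)$, and the transfer shows that $\pi^*\colon H^n(X,\QQ)\to H^n(Y,\QQ)$ is injective (being split up to the invertible scalar $k$); since $\pi^*c$ is torsion it vanishes in $H^n(Y,\QQ)$, whence $c$ vanishes in $H^n(X,\QQ)$ and is therefore torsion. I expect the main (and really the only) obstacle to be pinning down a clean reference or construction for the covering-space transfer with integral coefficients; the deduction itself carries no difficulty.
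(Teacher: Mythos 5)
Your proposal is correct and rests on the same key fact as the paper's proof, namely the transfer for a finite covering and the identity $\pi_!\pi^*=k\cdot\id$; indeed your ``alternative'' rational argument (tensor with $\QQ$, observe $\pi^*$ becomes split injective, conclude $c\otimes\QQ=0$) is exactly the proof given in the paper. Your primary integral version $mk\cdot c=0$ is just a slightly more direct phrasing of the same argument.
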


\begin{proof}
The composite $H^n(X,\ZZ)\otimes \QQ \xrightarrow{\pi^*} H^n(Y,\ZZ)\otimes \QQ \xrightarrow{\pi_*} H^n(X,\ZZ)\otimes \QQ$ is multiplication by $k$, which is invertible, so the pushforward map $\pi_*$ is a retraction of $\pi^*$, hence the latter is injective.

Then given a class $c\in H^n(X,\ZZ)$, if $\pi^*c$ is torsion, then $\pi^*c\otimes\QQ$ is zero, hence $c\otimes \QQ$ is zero, hence $c$ is a torsion class.
\end{proof}

\begin{rem}
The proof of Theorem 3.6 of \cite{Murray_Stevenson_11} actually tells us a tiny bit more than is claimed.
What it shows is that instead of demanding the base space $M$ is simply-connected, it is sufficient to ask that the composite $\pi_3(M)\to H_3(M)\to H_3(M)/\tors$ is surjective; this is because $\Hom(H_3(M)/\tors,\ZZ) \simeq\Hom(H_3(M),\ZZ)$, and it is the latter group that is used in the proof.

Further, the previous Lemma could be used to instead only ask that there is a finite-sheeted covering space $\tilde{M}\to M$ with $\pi_3(\tilde{M})\to H_3(\tilde{M})/\tors$ a surjective map. 
\end{rem}

Returning to the case of lifting gerbes, we can make a sharper claim, leaving the smooth category altogether.
We first need a preparatory technical lemma.

\begin{lemma}\label{lemma:torsion_extensions}
Let $K$ be a discrete group such that $H_3(K,\ZZ)$, $\Ext(H_2(K,\ZZ),\ZZ)$ and $H_1(K,A)$ are torsion, where $A$ is any $K$-module with underlying abelian group $A\simeq \ZZ^n$. 
If $G$ is a finite-dimensional Lie group with $\pi_0(G)\simeq K$, then $H^3(BG,\ZZ)$ is torsion.
\end{lemma}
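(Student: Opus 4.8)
The plan is to exploit the group extension $1 \to G_0 \to G \to K \to 1$, where $G_0$ is the identity component, which delooping on classifying spaces yields a fibration $BG_0 \to BG \to BK$ with $BK = K(K,1)$, and then to run the integral Serre spectral sequence
\[
  E_2^{p,q} = H^p(K;\, H^q(BG_0,\ZZ)) \Rightarrow H^{p+q}(BG,\ZZ),
\]
the coefficients forming the local system on $BK$ determined by the monodromy action of $K = \pi_1(BK)$ on the cohomology of the fibre. First I would record the low-degree cohomology of the fibre: since $BG_0$ is simply connected one has $H^0 = \ZZ$ and $H^1 = 0$; by Hurewicz $H_2(BG_0) = \pi_1(G_0)$, so $H^2(BG_0,\ZZ) = \Hom(\pi_1(G_0),\ZZ)$ is free abelian of some finite rank $n$; and $H^3(BG_0,\ZZ)$ is torsion by Borel's theorem, exactly as used in the proof of Theorem~\ref{thm:main_thm_easy}. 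The key observation is that $H^2(BG_0,\ZZ) \cong \ZZ^n$, equipped with its monodromy $K$-action, is precisely a module of the type $A$ appearing in the hypotheses.

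With this in hand, the four groups contributing to total degree $3$ are $E_2^{3,0} = H^3(K,\ZZ)$, $E_2^{2,1} = H^2(K; H^1(BG_0)) = 0$, $E_2^{1,2} = H^1(K; H^2(BG_0))$, and $E_2^{0,3} = H^0(K; H^3(BG_0))$. The last is a module of invariants inside a torsion group, hence torsion. For $E_2^{3,0}$ I would invoke the universal coefficient sequence $0 \to \Ext(H_2(K),\ZZ) \to H^3(K,\ZZ) \to \Hom(H_3(K),\ZZ) \to 0$: the quotient $\Hom(H_3(K),\ZZ)$ is torsion-free and, as $H_3(K)$ is torsion by hypothesis, it vanishes, while the subgroup $\Ext(H_2(K),\ZZ)$ is torsion by hypothesis; so $H^3(K,\ZZ)$ is torsion. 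This is exactly where the first two hypotheses are consumed.

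The main obstacle is the term $E_2^{1,2} = H^1(K; A)$ with $A = H^2(BG_0,\ZZ) \cong \ZZ^n$: the spectral sequence feeds us group \emph{cohomology}, whereas the available hypothesis controls group \emph{homology} $H_1(K,-)$. The bridge I would use is a dualisation argument. Writing $A^* = \Hom(A,\ZZ)$, which again has underlying group $\ZZ^n$ and so is an admissible coefficient module, the adjunction $\Hom_{\ZZ K}(P_\bullet, A) \cong \Hom_\ZZ(A^* \otimes_{\ZZ K} P_\bullet, \ZZ)$ for a free resolution $P_\bullet \to \ZZ$ over $\ZZ K$, together with the ordinary universal coefficient theorem applied to the free abelian complex $A^*\otimes_{\ZZ K}P_\bullet$, gives
\[
  0 \to \Ext(H_0(K,A^*),\ZZ) \to H^1(K,A) \to \Hom(H_1(K,A^*),\ZZ) \to 0.
\]
Here $\Hom(H_1(K,A^*),\ZZ) = 0$ because $H_1(K,A^*)$ is torsion by the third hypothesis, and $\Ext(H_0(K,A^*),\ZZ)$ is torsion because the coinvariants $H_0(K,A^*)=(A^*)_K$ are finitely generated; hence $E_2^{1,2}$ is torsion.

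Finally, all four $E_2$-terms in total degree $3$ being torsion, their subquotients $E_\infty^{p,3-p}$ are torsion, and $H^3(BG,\ZZ)$ — which carries a finite filtration whose graded pieces are exactly these $E_\infty$ terms — is a finite iterated extension of torsion groups, hence torsion. I expect the dualisation step for $H^1(K,A)$ to be the only genuinely delicate point, since everything else is either standard spectral-sequence bookkeeping or a direct reading-off of the hypotheses; in particular one must stay alert to the fact that $K$ may be infinite, so that finite generation is available for the modules $A, A^*$ and their (co)invariants but \emph{not} for $H_2(K)$ or $H_3(K)$, which is precisely why the torsion of $\Ext(H_2(K),\ZZ)$ and of $H_3(K)$ must be assumed rather than deduced.
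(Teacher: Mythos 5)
Your proof follows essentially the same route as the paper's: the Serre spectral sequence for $BG_0\to BG\to BK$, the universal coefficient sequence identifying $H^3(K,\ZZ)$ with a torsion group via the hypotheses on $H_3(K)$ and $\Ext(H_2(K),\ZZ)$, and the dualisation sequence $0\to \Ext(H_0(K,A^*),\ZZ)\to H^1(K,A)\to \Hom(H_1(K,A^*),\ZZ)\to 0$ to convert the homological hypothesis on $H_1(K,-)$ into control of $E_2^{1,2}$. The one small streamlining is your observation that $H^2(BG_0,\ZZ)\cong \Hom(\pi_1(G_0),\ZZ)$ is already free abelian of finite rank, which lets you skip the paper's preliminary step of splitting off the torsion subgroup of $\cH^2(BG_0)$ before invoking the local-coefficient universal coefficient theorem (which you derive directly from the tensor-hom adjunction and reflexivity of $\ZZ^n$, where the paper cites Cartan--Eilenberg).
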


\begin{proof}
For the rest of the proof, unspecified coefficients for cohomology should be taken as $\ZZ$. Given a Lie group as in the statement, there is a short exact sequence of Lie groups $G_0\to G\xrightarrow{\pi} K$, where $G_0$ is the connected component of the identity.
This gives a fibration $BG_0\to BG\xrightarrow{B\pi} BK$, where the fibre $BG_0$ is connected.
There is a (first quadrant) Serre spectral sequence for this fibration given by 
\[
  E_2^{s,t} = H^s(K,\mathcal{H}^t(BG_0)) \Longrightarrow H^{s+t}(BG),
\]
where $\mathcal{H}^t(BG_0)$ is the $K$-module arising from the local system on $BK$ with fibre over $x\in BK$ given by $H^t(B\pi^{-1}(x))\simeq H^t(BG_0)$.
Note in particular that $\mathcal{H}^0(BG_0)\simeq \ZZ$ carries the trivial $K$-action, since it can be identified with the constant $\ZZ$-valued functions on $BG_0$.
Recall that $H^k(BG_0)$ is finitely generated and is torsion for odd $k$ \cite{Borel_53}.

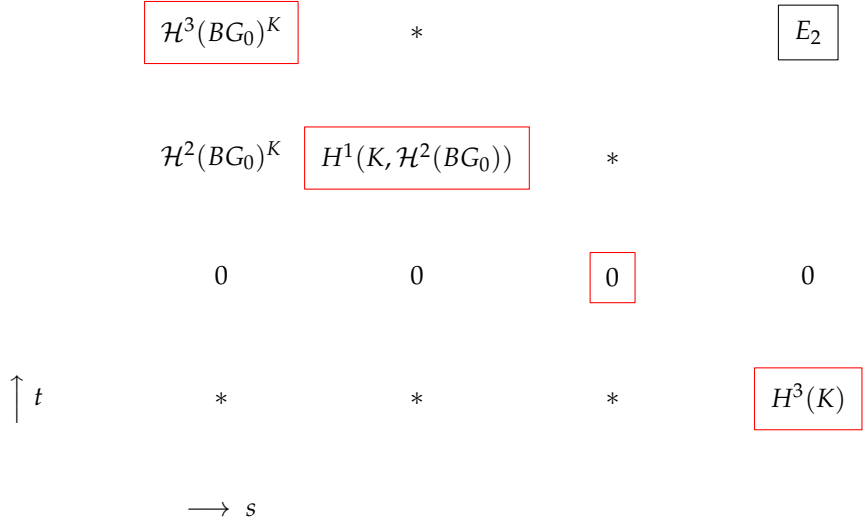
\begin{figure}
\[
  \xymatrix@!C=5em{
    & *++[F-:red]\hbox{$\cH^3(BG_0)^K$} & \ast &  & *++[F-:black]\hbox{$E_2$} \\
    & \cH^2(BG_0)^K
    & *++[F-:red]\hbox{$H^1(K,\mathcal{H}^2(BG_0))$} 
    & \ast & \\
    &0 & 0 & *++[F-:red]\hbox{$0$} & 0  \\
    \Big\uparrow\ t &\ast & \ast & \ast & *++[F-:red]\hbox{$H^3(K)$} \\
    &  \longrightarrow\ s
  }
\]
\caption{The $E_2$ page of the Serre spectral sequence associated to the fibre sequence $BG_0 \to BG\to BK$.
Boxed entries lie on the line $s+t=3$.}\label{fig:E2}
\end{figure}

Figure~\ref{fig:E2} shows part of the $E_2$ page, making the substitutions that $H^0(K,A) = V^K$, the invariants of the $K$-action on the module $A$, and that $H^1(BG_0)=H_1(BG_0)=0$, as $G_0$ is connected, giving the zeroes in the 1-row.
The asterisk entries are not needed.

Note that by the universal coefficient theorem there is a short exact sequence
\[
  0\to \Ext(H_2(K),\ZZ)\to H^3(K)\to \Hom(H_3(K),\ZZ)\to 0,
\]
hence if $H_3(K)$ is torsion, the right term vanishes, and so $H^3(K)$ is torsion, by the assumption on the Ext group.

The short exact sequence $H^2(BG_0)_\tors \into H^2(BG_0)\to H^2(BG_0)/\tors$ of groups is also a short exact sequence of $K$-modules, so there is a long exact sequence which reads in part
\[
  \cdots \to H^1(K,\cH^2(BG_0)_\tors) \to H^1(K,\cH^2(BG_0))\to H^1(K,\cH^2(BG_0)/\tors)\to\cdots
\]
As $\cH^2(BG_0)_\tors$ is finite, the left term is torsion, and so it suffices to know that $H^1(K,\cH^2(BG_0)/\tors)$ is torsion to conclude that $H^1(K,\cH^2(BG_0))$ a torsion group.

There is a version of the universal coefficient theorem (UCT) for cohomology with local coefficients, that generally only holds under slightly special extra assumptions.
Notice that while everything here is in terms of group cohomology of $K$, this is the same as the singular cohomology of $BK$
Finding this UCT was set as an exercise by Spanier \cite[Ch.\ 5, Exercise J.4]{Spanier_66}, and can be completed by using \cite[Ch VI, Theorem 3.3a]{CartanEilenberg} under the special assumption (3a') in \emph{loc.\ cit.}.\footnote{The formulation I use here was given by Oscar Randal-Williams at \url{https://math.stackexchange.com/q/4135432} (version: 2021-05-11).} 
The special assumptions are satisfied by the setup here, as we are working over the integers, and where the local coefficient system $\cH^2(BG_0)/\tors$ has fibres that are finitely generated free abelian groups.
The latter fact means we can also rely on the local system being reflexive, in that it is naturally isomorphic to its double dual.

The upshot is, there is a short exact sequence
\[
  0\to \Ext(H_0(K,A^*),\ZZ)\to H^1(K,A)\to \Hom(H_1(K,A^*),\ZZ)\to 0
\]
where $A^*=\Hom_\ZZ(A,\ZZ)$ is the dual module, also with underlying abelian group isomorphic to $\ZZ^n$.
Then $H_0(K,A^*)$ is a quotient of $A^*$, hence is finitely generated, and so $\Ext(H_0(K,A^*),\ZZ)$ is torsion (as it only sees the torsion subgroup of $H_0(K,A^*)$, which is finite).
Thus if $H_1(K,A^*)$ is torsion, the right term vanishes, and so $H^1(K,A)$ is torsion, as needed, where we take $A=\cH^2(BG_0)/\tors$.

We thus know that all the boxed groups are torsion groups, and so the groups $E_\infty^{s,t}$ for $s+t=3$, being subquotients of them, are also torsion.
Further, this implies that $H^3(BG,\ZZ)$, being an iterated extension of torsion group, is torsion, as needed.
\end{proof}

Note that nontrivial coefficients for $H_1$ are essential in this result, since the action of $K$ on $\cH^2(BG_0)$ is nontrivial in general. 

\begin{example}\label{ex:nontrivial_action_of_component_grp}
For the group $G=U(1)\rtimes \Aut((U(1))$, the $\Aut(U(1))\simeq \{\pm1\}$-action on $\cH^2(BU(1))\simeq \ZZ$ is nontrivial, as the inversion map on $U(1)$ induces a map on $BU(1)$ that pulls back the Chern class of the universal line bundle to its negative.
More generally, given a group $K$ with a surjection $K\to \{\pm1\}$, we get a nontrivial semidirect product $U(1)\rtimes K$ where $\cH^2(BU(1))$ is a nontrivial $K$-module.
\end{example}

It is not immediately obvious what the action can be for our class of examples appearing in Example~\ref{ex:loc_fin_group_bounded_torsion}, so this is not a redundant assumption.

\begin{rem}\label{rem:specific_torsion_info}
Given more specific information about the torsion in $H^*(BG_0,\ZZ)$, for instance if it vanishes, or is all $P$-torsion for some class $P$ of primes, and $K$ is a $P$-torsion group, then more information about what sort of torsion appears in $H^3(BG,\ZZ)$ is available.
\end{rem}

Recall that a \emph{locally finite group} is a discrete group that is the filtered colimit of finite groups, or, equivalently, the direct limit of its finite subgroups.
For instance, an infinite direct sum of finite groups is locally finite.

\begin{cor}\label{cor:loc_finite_group_torsion_Ext}
Let $K$ be a locally finite group such that $\Ext(H_2(K),\ZZ)$ is torsion.
If $G$ is a finite-dimensional Lie group with $\pi_0(G)\simeq K$, then $H^3(BG,\ZZ)$ is torsion.
\end{cor}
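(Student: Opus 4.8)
The plan is to deduce this as the special case of Lemma~\ref{lemma:torsion_extensions} obtained when $K$ is locally finite: it suffices to verify the three hypotheses of that lemma, namely that $H_3(K,\ZZ)$, $\Ext(H_2(K,\ZZ),\ZZ)$, and $H_1(K,A)$ are all torsion, for $A$ an arbitrary $K$-module whose underlying abelian group is $\ZZ^n$. The middle condition is exactly the hypothesis of the corollary, so all the work lies in deducing the other two from local finiteness.

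First I would write $K=\colim_i K_i$ as the filtered colimit of its finite subgroups. The key tool is that group homology commutes with such filtered colimits, provided the coefficients are handled compatibly: for a fixed $K$-module $M$, restricting the action along each inclusion $K_i\hookrightarrow K$ yields a filtered system $\{H_n(K_i,M)\}_i$ whose colimit is $H_n(K,M)$. This is seen by computing $H_\bullet$ through the inhomogeneous bar complex $M\otimes_\ZZ\ZZ[K^{\times \bullet}]$, whose differential only involves the group multiplication and the action on $M$. Since each chain $m\otimes[g_1|\cdots|g_n]$ involves finitely many elements of $K$, all lying in some $K_i$, the bar complex for $K$ is the filtered colimit of the bar complexes for the $K_i$ (with $M$ restricted), and filtered colimits of abelian groups are exact, so they commute with homology.

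The second ingredient is the standard fact that for a finite group $F$ and \emph{any} $F$-module $M$, the homology $H_n(F,M)$ is annihilated by $|F|$ for every $n\geq 1$, hence is torsion. Taking trivial coefficients $M=\ZZ$ in degree $3$ makes each $H_3(K_i,\ZZ)$ torsion, so $H_3(K,\ZZ)=\colim_i H_3(K_i,\ZZ)$ is a filtered colimit of torsion groups and therefore torsion. Taking $M=A$ (restricted to each $K_i$) in degree $1$ makes each $H_1(K_i,A)$ torsion, and again $H_1(K,A)=\colim_i H_1(K_i,A)$ is torsion. With the hypothesis on $\Ext(H_2(K),\ZZ)$, all three hypotheses of Lemma~\ref{lemma:torsion_extensions} now hold, and the conclusion follows.

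I expect the main obstacle to be the colimit formula for homology with the nontrivial coefficient module $A$: the trivial-coefficient case is routine, but one must check that restricting the single module $A$ to each finite subgroup still produces a filtered system computing $H_\bullet(K,A)$ in the limit, which the bar-complex argument above supplies. It is also worth stressing why the separate $\Ext$ hypothesis cannot be dropped: although $H_2(K)=\colim_i H_2(K_i)$ is itself torsion, the functor $\Ext(-,\ZZ)$ need not preserve torsion for infinite torsion groups (for example $\Ext(\QQ/\ZZ,\ZZ)$ fails to be torsion), so this assumption genuinely supplements local finiteness rather than following from it.
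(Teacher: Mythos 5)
Your proposal is correct and follows essentially the same route as the paper's proof: express $K$ as the filtered colimit of its finite subgroups, use that group homology commutes with filtered colimits (including for the restricted coefficient module $A$) together with the torsion-ness of the homology of finite groups to verify the $H_3(K,\ZZ)$ and $H_1(K,A)$ hypotheses, and then invoke Lemma~\ref{lemma:torsion_extensions}. The extra details you supply (the bar-complex justification, annihilation by $|F|$, and the remark that $\Ext(-,\ZZ)$ need not preserve torsion) only flesh out what the paper leaves implicit.
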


\begin{proof}
The key fact we need is that homology of groups commutes with taking filtered colimits, and this is true even with nontrivial coefficients, assuming the coefficients are also expressed as a direct limit.
For a locally finite group $K$ acting on an abelian group $A$, we can restrict the action on $A$ to finite subgroups $K_\alpha\subset K$, and so then $A$ as a $K$-module is the colimit of $K_\alpha$-modules.

Now since the group homology of a finite group is torsion, the homology groups $H_n(K,\ZZ)$, $n=1,3$ of a locally finite group are also torsion. 
By the observation about homology with nontrivial module coefficients, we can even conclude that $H_1(K,A)$ is torsion for $A$ a $K$-module.

Thus if we also know that $\Ext(H_2(K),\ZZ)$ is torsion, then we have all the hypotheses for Lemma~\ref{lemma:torsion_extensions}, and so we can conclude that $H^3(BG,\ZZ)$ as required.
\end{proof}

With Lemma~\ref{lemma:torsion_extensions} in hand, we can prove the sharper version of the main result.

\begin{thm}\label{thm:main_thm_sharp}
Fix a finite-dimensional Lie group $G$ and a connected topological space $X$ such that there is a finite-sheeted covering space $Y\to X$, with fundamental group satisfying the following condition: every homomorphism $\pi_1(Y)\to \pi_0(G)$ factors through a group $K$ satisfying the algebraic properties in Lemma~\ref{lemma:torsion_extensions}.
Then for any central extension $U(1)\to \widehat{G}\to G$, and principal $G$-bundle $P\to X$, the associated lifting gerbe is torsion.
\end{thm}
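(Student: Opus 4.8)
The plan is to reduce, via the preceding covering-space lemma, to the space $Y$, and then to exhibit the pulled-back lifting gerbe as one induced from a Lie group $G'$ whose component group is exactly the group $K$ supplied by the hypothesis, so that Lemma~\ref{lemma:torsion_extensions} applies. First I would pass to the finite-sheeted cover $\pi\colon Y\to X$: since $\pi^*(P\times\widehat{G},P)\simeq(\pi^*P\times\widehat{G},\pi^*P)$ and $DD$ commutes with pullback by Lemma~\ref{lemma:DD-facts}, the covering-space lemma reduces the claim to showing that the lifting gerbe of $\pi^*P\to Y$ is torsion. By the final item of Lemma~\ref{lemma:DD-facts} this amounts to showing that $\chi^*\alpha$ is torsion, where $\chi\colon Y\to BG$ classifies $\pi^*P$ and $\alpha\in H^3(BG,\ZZ)$ classifies the central extension.

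Next I would produce the group $K$. The quotient $Q:=\pi^*P/G_0$ is a principal $\pi_0(G)$-bundle on $Y$, classified by the composite $Y\xrightarrow{\chi}BG\to B\pi_0(G)$; as $\pi_0(G)$ is discrete this detects a homomorphism $\phi\colon\pi_1(Y)\to\pi_0(G)$, well defined up to conjugacy. By hypothesis $\phi$ factors as $\pi_1(Y)\xrightarrow{\psi}K\xrightarrow{\rho}\pi_0(G)$ with $K$ as in Lemma~\ref{lemma:torsion_extensions}; correspondingly there is a map $\beta\colon Y\to BK$ inducing $\psi$ on fundamental groups, with $B\rho\circ\beta$ homotopic to $Y\xrightarrow{\chi}BG\to B\pi_0(G)$ (two maps into the aspherical space $B\pi_0(G)$ inducing the same homomorphism are homotopic). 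I then form the pullback Lie group $G':=G\times_{\pi_0(G)}K$ along the quotient $G\to\pi_0(G)$; this sits in a short exact sequence $G_0\to G'\to K$, so $G'$ has identity component $G_0$ and $\pi_0(G')\simeq K$, and the first projection gives a homomorphism $G'\to G$ that is the identity on $G_0$ and covers $\rho$.

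The hard part will be showing that $\chi$ lifts through $BG'\to BG$. Here I would compare the two fibrations $BG_0\to BG'\to BK$ and $BG_0\to BG\to B\pi_0(G)$: the map $BG'\to BG$ lies over $B\rho$ and restricts to the identity on the common fibre $BG_0$, so the resulting square is a homotopy pullback. Since $B\rho\circ\beta$ agrees with the composite $Y\xrightarrow{\chi}BG\to B\pi_0(G)$, the maps $\chi$ and $\beta$ then determine a lift $\tilde\chi\colon Y\to BG'$ with $\chi$ homotopic to the composite $Y\xrightarrow{\tilde\chi}BG'\to BG$. Writing $\alpha'\in H^3(BG',\ZZ)$ for the image of $\alpha$ under $(BG'\to BG)^*$ (equivalently, the class of the pulled-back central extension $\widehat{G}':=\widehat{G}\times_G G'$), naturality gives $\chi^*\alpha=\tilde\chi^*\alpha'$.

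Finally, because $\pi_0(G')\simeq K$ satisfies the hypotheses of Lemma~\ref{lemma:torsion_extensions}, the group $H^3(BG',\ZZ)$ is torsion, so $\alpha'$, and hence $\chi^*\alpha=\tilde\chi^*\alpha'$, are torsion; thus the lifting gerbe on $Y$ is torsion, and the covering-space lemma then gives that the original lifting gerbe on $X$ is torsion. I expect the genuine obstacle to be the homotopy-pullback step: verifying that the square of classifying spaces really is a homotopy pullback (so that the group-theoretic factorisation $\phi=\rho\psi$ lifts the geometric classifying map $\chi$ itself, and not merely the induced $\pi_0(G)$-bundle), together with checking that the point-set hypotheses on $Y$ are good enough for the classifying-space and covering-space arguments to be applied.
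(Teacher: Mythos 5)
Your proposal is correct and follows essentially the same route as the paper: reduce to the finite cover $Y$ via the covering-space lemma, form the pullback group $G_K = K\times_{\pi_0(G)}G$, factor the classifying map $Y\to BG$ through $BG_K$ using the homotopy-pullback square $BG_K \simeq BK\times_{B\pi_0(G)}BG$, and invoke Lemma~\ref{lemma:torsion_extensions} to see $H^3(BG_K,\ZZ)$ is torsion. The step you flag as the genuine obstacle is exactly the one the paper uses (and asserts with less justification than you supply), so your fibrewise-equivalence argument for the homotopy pullback is a welcome elaboration rather than a deviation.
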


\begin{proof}
The proof is similar to that for Theorem~\ref{thm:main_thm_easy}, except instead of reducing the structure group to $G_0$, we proceed as follows.

Fix a central extension $U(1)\to \widehat{G}\to G$.
First note that the lifting bundle gerbe for the universal $G$-bundle $EG\to BG$ has $DD$-class given by the the class in $H^3(BG,\ZZ)$ classifying the central extension, by Lemma~\ref{lemma:DD-facts}.\ref{lemma:universal_lifting_gerbe} applied to the identity map of $BG$.

Then, given a group $K$ satisfying the hypothesis of Lemma~\ref{lemma:torsion_extensions}, and a homomorphism $K\to \pi_0(G)$, the pullback group $G_K := K\times_{\pi_0(G)} G$ has a central extension $U(1)\to K\times_{\pi_0}\widehat{G}\to G_K$. 
Since $H^3(G_K,\ZZ)$ is torsion (as $\pi_0(G_K) \simeq K$) all lifting gerbes associated to $G_K$-bundles are torsion.
Further, the lifting gerbe of the pulled-back $G$-bundle $BG_K\times_{BG} EG \to BG_K$ is also torsion.
Hence if the classifying map of a $G$-bundle factors through $BG_K$, even up to homotopy, then the $G$-bundle has torsion lifting gerbe.

Now fix a $G$-bundle $P\to X$, take the covering space $Y\to X$ as in the statement of the theorem, and the classifying map $Y\to X \to BG$.
Now there is a diagram commuting (up to homotopy)
\[
  \xymatrix{
    Y\ar[rr] \ar[d]&& BG \ar[d]\\
    B\pi_1(Y) \ar[rr]  \ar[dr] && B\pi_0(G)\\
    & BK \ar[ur]
  }
\]
where $K$ is as in the theorem statement.
Since $BG_K$ is the (homotopy) pullback $BK\times_{B\pi_0(G)}BG$, the classifying map $Y\to BG$ factors through $BG_K$, up to homotopy.
Thus the lifting gerbe for $Y\times_X P\to Y$ is the pullback of the torsion lifting gerbe on $BG_K$, and hence is torsion.

The rest of the proof of Theorem~\ref{thm:main_thm_easy} applies, to arrive at the conclusion that $DD(P)$ is torsion.
\end{proof}

\begin{rem}
The hypothesis on homomorphisms $\pi_1(Y) \to \pi_0(G)$ is stated so that one recovers both the cases $K=\pi_1(Y)$ and $K\subseteq \pi_0(G)$, which are what will presumably occur most often in practice.
\end{rem}

\section{Examples and applications}

\subsection{Examples}

Theorem~\ref{thm:main_thm_sharp} applies fairly trivially when $G$ is a finite-dimensional Lie group with $\pi_0(G)$ finite, so the interesting cases are when $\pi_0(G)$ is infinite.

\begin{cor}
If $X$ is a topological space with a locally finite fundamental group $\pi=\pi_1(X)$, with $\Ext\left(H_2(\pi),\ZZ\right)$ torsion, and $G$ is a finite-dimensional Lie group, then every lifting gerbe arising from a $G$-bundle on $X$ is torsion.

Similarly, if $G$ is a finite-dimensional Lie group with $\pi=\pi_0(G)$ locally finite with $\Ext\left(H_2(\pi),\ZZ\right)$ torsion, then for any topological space $X$, every lifting gerbe associated to a $G$-bundle on $X$ and central extension of $G$ is torsion.
\end{cor}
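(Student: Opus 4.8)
The plan is to derive both statements from the machinery already assembled, but by two slightly different routes, since the second statement is genuinely easier: there the torsion phenomenon already lives at the level of $BG$, whereas in the first statement $G$ is unconstrained and all of the control must come from $X$.

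For the second statement I would argue directly, bypassing Theorem~\ref{thm:main_thm_sharp} entirely. Since $\pi=\pi_0(G)$ is locally finite with $\Ext(H_2(\pi),\ZZ)$ torsion, Corollary~\ref{cor:loc_finite_group_torsion_Ext} applies to give that $H^3(BG,\ZZ)$ is a torsion group. The chosen central extension $U(1)\to\widehat{G}\to G$ is classified by some $\alpha\in H^3(BG,\ZZ)$, which is therefore torsion, say $n\alpha=0$. Given any principal $G$-bundle $P\to X$ with classifying map $\chi\colon X\to BG$, Lemma~\ref{lemma:DD-facts}.\ref{lemma:universal_lifting_gerbe} identifies its $DD$-class with $\chi^*\alpha$, whence $n\cdot\chi^*\alpha=\chi^*(n\alpha)=0$. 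Thus the lifting gerbe is torsion for an arbitrary topological space $X$ and arbitrary $\chi$; no hypothesis on $X$ is used.

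For the first statement $G$ is arbitrary, so $H^3(BG,\ZZ)$ need not be torsion and I instead invoke Theorem~\ref{thm:main_thm_sharp}. I may assume $X$ path-connected, since the notation $\pi_1(X)$ presupposes a basepoint and a disconnected $X$ is handled one path component at a time. I would take the trivial one-sheeted covering $Y=X$ and set $K=\pi_1(X)=\pi$. Because $\pi$ is locally finite with $\Ext(H_2(\pi),\ZZ)$ torsion, the argument inside the proof of Corollary~\ref{cor:loc_finite_group_torsion_Ext}---homology of a locally finite group is the filtered colimit of the (torsion) homologies of its finite subgroups, and this persists with nontrivial coefficients once the coefficient module is written as the corresponding colimit---shows that $K$ satisfies \emph{all} the algebraic hypotheses of Lemma~\ref{lemma:torsion_extensions}, namely that $H_3(K,\ZZ)$, $\Ext(H_2(K,\ZZ),\ZZ)$ and $H_1(K,A)$ are torsion. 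Every homomorphism $\pi_1(Y)=\pi_1(X)\to\pi_0(G)$ then factors through $K$ via the identity, so the hypothesis of Theorem~\ref{thm:main_thm_sharp} is met and every lifting gerbe on $X$ is torsion.

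The individual steps are short; the only point requiring care is the choice of $K$ in the first statement. It is tempting to let $K$ be the \emph{image} of a given homomorphism $\pi_1(X)\to\pi_0(G)$, which is again locally finite, but the hypothesis that $\Ext(H_2(-),\ZZ)$ is torsion is assumed only for $\pi$ itself and need not pass to quotients; taking $K=\pi_1(X)$ and factoring through the identity sidesteps this. The genuinely technical input, the torsion of the local-coefficient homology $H_1(K,A)$ for $A$ free abelian of finite rank, is precisely what was handled in Corollary~\ref{cor:loc_finite_group_torsion_Ext}, so it is reused here rather than reproved.
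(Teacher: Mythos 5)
Your proposal is correct, and for the first statement it is exactly the intended derivation: the paper offers no separate proof of this corollary, and the remark following Theorem~\ref{thm:main_thm_sharp} makes clear it is meant to follow by taking $Y=X$ and $K=\pi_1(Y)$ in the one case and $K=\pi_0(G)$ in the other, with the algebraic hypotheses of Lemma~\ref{lemma:torsion_extensions} supplied by the locally-finite-group argument of Corollary~\ref{cor:loc_finite_group_torsion_Ext}. Your caution about not replacing $K=\pi_1(X)$ by the image of a homomorphism into $\pi_0(G)$ is well taken, since torsionness of $\Ext(H_2(-),\ZZ)$ is not obviously inherited by quotients.

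For the second statement you deviate slightly, and to your advantage: rather than routing through Theorem~\ref{thm:main_thm_sharp} with $K=\pi_0(G)$, you observe that Corollary~\ref{cor:loc_finite_group_torsion_Ext} already makes $H^3(BG,\ZZ)$ torsion, so the class $\alpha$ of the central extension is torsion and $DD = \chi^*\alpha$ is torsion by Lemma~\ref{lemma:DD-facts}.\ref{lemma:universal_lifting_gerbe}, with no covering space and no connectivity hypothesis on $X$. This is cleaner than the theorem-based route and in fact justifies the phrase ``for any topological space $X$'' in the statement, which sits slightly awkwardly with the connectedness hypothesis in Theorem~\ref{thm:main_thm_sharp}; the only cost is the implicit reliance on the existence of a classifying map $\chi$, which the paper already assumes throughout.
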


The main difficulty thus lies in arranging that the $\Ext$ group is torsion. 
But there are certainly infinite groups where this is so.

\begin{lemma}\label{lemma:loc_fin_group_bounded_torsion}
Let $K$ be a locally finite group given by a countable direct limit of finite abelian groups $K_\alpha$, the exponents of which are bounded, but whose orders are unbounded.
Then $K$ is an infinite group satisfying the hypotheses of Lemma~\ref{lemma:torsion_extensions}.
\end{lemma}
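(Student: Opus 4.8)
The plan is to verify directly the three algebraic conditions appearing in Lemma~\ref{lemma:torsion_extensions}. First note the easy structural facts: as a filtered colimit of finite abelian groups, $K$ is itself abelian and locally finite, and reading the $K_\alpha$ as (a cofinal system of) the finite subgroups of $K$ in the sense of the definition of local finiteness, we have $|K|\geq |K_\alpha|$ for all $\alpha$, so unbounded orders force $K$ to be infinite. Two of the three hypotheses---that $H_3(K,\ZZ)$ is torsion and that $H_1(K,A)$ is torsion for every $K$-module $A$ with underlying group $\ZZ^n$---hold for \emph{any} locally finite group, by exactly the argument already given in the proof of Corollary~\ref{cor:loc_finite_group_torsion_Ext}: group homology commutes with filtered colimits (even with coefficients, writing $A$ as the constant diagram of $K_\alpha$-modules), and the positive-degree homology of a finite group is torsion. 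Thus the entire content of the lemma is the remaining condition, that $\Ext(H_2(K,\ZZ),\ZZ)$ is torsion.

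The key point---and the one to emphasise---is that it is the \emph{bounded exponent}, not bounded order, that is exploited; the unboundedness of the orders $|K_\alpha|$ is precisely what makes $K$ infinite and simultaneously renders useless any argument based on annihilation by $|K_\alpha|$. Since every element of $K$ lies in some $K_\alpha$ of exponent dividing the common bound $N$, the abelian group $K$ satisfies $NK=0$. I would then invoke the standard computation of the second homology of an abelian group, $H_2(K,\ZZ)\cong \Lambda^2 K$. Because $N(x\wedge y)=(Nx)\wedge y = 0$, the exterior square $\Lambda^2 K$ is also annihilated by $N$; hence $H_2(K,\ZZ)$ is a torsion group of bounded exponent dividing $N$. (Equivalently one could run this through the colimit, using $\exp(\Lambda^2 K_\alpha)\mid \exp(K_\alpha)\leq N$ and that a filtered colimit of groups of exponent dividing $N$ again has exponent dividing $N$.)

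Finally I would feed this bound into the $\Ext$ term. For any abelian group $B$ with $NB=0$, bifunctoriality of $\Ext$ applied to the identity $N\cdot\id_B = 0$ gives $N\cdot\id_{\Ext(B,\ZZ)}=0$, so $\Ext(B,\ZZ)$ is itself annihilated by $N$ and in particular torsion. Taking $B=H_2(K,\ZZ)$ shows $\Ext(H_2(K,\ZZ),\ZZ)$ is torsion, which completes the verification of all three hypotheses of Lemma~\ref{lemma:torsion_extensions}. The only step that requires genuine care is the exponent bound on $H_2$: one must pass through the exterior-square description (so that the exponent, rather than the order, is what is seen by $H_2$ and then by $\Ext$), since the colimit destroys any control coming from the orders of the $K_\alpha$.
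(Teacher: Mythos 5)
Your proposal is correct, and the verification of the two ``generic'' hypotheses ($H_3(K,\ZZ)$ and $H_1(K,A)$ torsion) matches the paper exactly: both arguments just note that these hold for any locally finite group and defer to Corollary~\ref{cor:loc_finite_group_torsion_Ext}. Where you genuinely diverge is the key step, showing $\Ext(H_2(K,\ZZ),\ZZ)$ is torsion. The paper's route is to pass to a cofinal sequential subsystem (this is where countability is used), invoke the Milnor sequence
\[
  0\to \mathop{\lim\nolimits^1}\Hom(H_2(K_n),\ZZ)\to \Ext(H_2(K),\ZZ) \to \lim_{\NN}\Ext(H_2(K_n),\ZZ)\to 0,
\]
kill the $\mathop{\lim\nolimits^1}$ term using $\Hom(H_2(K_n),\ZZ)=0$, and then observe that the inverse limit of groups of uniformly bounded exponent is still torsion. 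You instead work with $K$ itself: $K$ is abelian of finite exponent $N$, so $H_2(K,\ZZ)\cong\Lambda^2 K$ is annihilated by $N$, and additivity of $\Ext$ then forces $N\cdot\Ext(H_2(K),\ZZ)=0$. Your argument is shorter, entirely avoids $\mathop{\lim\nolimits^1}$, and---notably---does not use countability of the directed system at all, so it proves a slightly stronger statement. The price is that it leans on the abelianness of $K$ through the isomorphism $H_2(K)\cong\Lambda^2 K$ (valid for all abelian groups, finitely generated or not); the paper's $\mathop{\lim\nolimits^1}$ scaffolding is the version one would want if the $K_\alpha$ were allowed to be non-abelian, where controlling $\exp(H_2(K_\alpha))$ by $\exp(K_\alpha)$ is no longer automatic. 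In effect the paper's ``the torsion in $H_2(K_\alpha,\ZZ)$ is bounded in terms of the exponent of $K_\alpha$'' is secretly the same exterior-square fact you make explicit, so your write-up also makes that step more transparent.
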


\begin{proof}
This implies $K$ has finite exponent, namely the least common multiple of the exponents of the groups $K_\alpha$, and also implies that $H_2(K,\ZZ) \simeq \colim_\alpha H_2(K_\alpha,\ZZ)$ is torsion, since each $H_2(K_\alpha,\ZZ)$ is finitely generated and torsion, hence finite. 
Further, the torsion in $H_2(K_\alpha,\ZZ)$ is bounded in terms of the exponent of $K_\alpha$, so there is a uniform bound on the torsion as $\alpha$ varies.

Using the fact that a countable directed diagram has a cofinal subsequence $(A_n)_{n\in\NN}$, the colimit defining $K$ can be assumed without loss of generality to be sequential.
We can then consider the exact sequence (all homology with integer coefficients)
\[
  0\to \mathop{\lim\nolimits^1}\Hom(H_2(K_n),\ZZ)\to \Ext(H_2(K),\ZZ) \to \lim_{\NN}\Ext(H_2(K_n),\ZZ)\to 0.
\]
Now $\Hom(H_2(K_n),\ZZ)=0$ for all $n$, as $H_2(K_n,\ZZ)$ is finite, hence the $\mathop{\lim\nolimits^1}$ term is trivial.
Now we use the fact that the sequence $H_2(K_n,\ZZ)$ has bounded exponent, hence $\Ext(H_2(K_n),\ZZ)$ has bounded torsion, so that $\lim_{\NN}\Ext(H_2(K_n),\ZZ)$ is still torsion.
This, finally, gives us that $\Ext(H_2(K,\ZZ),\ZZ)$ is torsion, and hence we can apply Corollary~\ref{cor:loc_finite_group_torsion_Ext} to conclude.
\end{proof}

\begin{example}\label{ex:loc_fin_group_bounded_torsion}
For instance, consider the infinite direct sum $\bigoplus_{n\in\NN} A$ for any finite abelian group $A$.
This is locally finite, as it is the colimit of the sugroups given by finite direct sums, and its exponent is same as that of $A$.

More generally, if $K_0 \subset K_1 \subset K_2\subset \cdots$ is a countable increasing sequence of finite abelian groups with eventually constant exponent $e$, then $\bigcup_n K_n$ is locally finite with exponent $e$.
\end{example}

In the other direction, here is a counterexample to Theorem~\ref{thm:main_thm_sharp} that was used in \cite{Murray_Stevenson_11}. The specific hypothesis of \cite[Theorem 3.6]{Murray_Stevenson_11} that is violated---connectivity of $G$---is not one required for the version of the theorem here, but it still breaks our hypotheses in other ways.

\begin{example}[{\cite[\S4.1]{Bryl} and \cite[\S3.5]{Stu_phd}}]\label{ex:cup_product}
Take a principal $U(1)$-bundle $Q\to X$ and a function $X\to U(1)$, with classes $\alpha\in H^2(X,\ZZ)$ and $\beta\in H^1(X,\ZZ)$.
The bundle gerbe classified by the cup product $\alpha\cup\beta$ is given by the lifting bundle gerbe corresponding to the central extension
\[
  U(1) \to (U(1)\times \ZZ) \tilde\times U(1) \to U(1)\times \ZZ
\]
with product $(n_1,z_1;w_1)\cdot (n_2,z_2;w_2) = (n_1+n_2,z_1z_2;w_1w_2z_1^{n_2})$ and the principal $(U(1)\times \ZZ)$-bundle $Q\times_X f^*\RR\to X$.\footnote{Treating $\RR\to U(1)$ as a principal $\ZZ$-bundle. Note that there is a typo the definition of the product in \cite{Murray_Stevenson_11}, we use the definition in \cite[Equation (4-10)]{Bryl}}

Then if $X=M\times S^1$, then every every finite-index subgroup $H < \pi_1(M\times S^1)\simeq \pi_1(M)\times \ZZ$ admits a surjection onto a finite-index subgroup of $\ZZ$, for example the one induced by the composite $Y\to M\times S^1\xrightarrow{\pr_2} S^1$
Thus for every finite-sheeted covering space $Y\to M\times S^1$, every nontrivial homomorphism $\phi\colon \pi_1(Y)\to \pi_0(U(1)\times \ZZ)=\ZZ$ fails to factor through a subgroup satisfying Lemma~\ref{lemma:torsion_extensions}, as $\im(\phi)\simeq \ZZ$ and $H_1(\ZZ,\ZZ)=\ZZ$.

If $H^2(M,\ZZ)$ contains non-torsion classes, then by K\"unneth we have $H^3(M\times S^1,\ZZ) \simeq H^2(M,\ZZ)\otimes H^1(S^1,\ZZ)\oplus H^3(M,\ZZ)\otimes H^0(S^1,\ZZ)$, which means there are cup product classes that are non-torsion and corresponding to finite-dimensional lifting gerbes by the above construction.
\end{example}

However, what happens if we are in the situation where we know $K$ is locally finite, and so Corollary~\ref{cor:loc_finite_group_torsion_Ext} has a chance of applying?

\begin{example}\label{ex:pullback_extension_from_K}
Even if $K$ is locally finite, if there is a non-torsion class in $H^3(K,\ZZ)$, this gives rise to a nontrivial central extension  $U(1) \to \widehat{K}\to K$.
Given any semidirect product $G = G_0\rtimes K$, we can form the pullback $G\times_K \widehat{K}$, which is then a nontrivial central extension of $G$, and in fact is classified by a non-torsion class in $H^3(BG,\ZZ)$, using the fact $H^3(K,\ZZ)\to H^3(BG,\ZZ)$ has a retraction induced by the section of $G\to K$.
\end{example}

How might one get non-trivial classes in $H^3(K,\ZZ)$ for a locally finite group $K$? If there is some nontrivial $c\in \Ext(H_2(K),\ZZ)$, then using the exact sequence
\[
  0=\Hom(H_2(K),\RR) \to \Hom(H_2(K),\RR/\ZZ) \xrightarrow{\simeq} \Ext(H_2(K),\ZZ) \to \Ext(H_2(K),\RR) = 0
\]
we get a nontrivial homomorphism $\phi_c\colon H_2(K)\to \RR/\ZZ\simeq U(1)$ (here using the fact $\RR$ is divisible, hence an injective group, and $H_2(K)$ is torsion).
Further, since $H^2(K,H_2(K)) \to \Hom(H_2(K),H_2(K))$ has a section, by the universal coefficient theorem for group homology, the identity map on $H_2(K)$ coresponds to a nontrivial central extension $H_2(K) \to \widehat{K} \to K$ classified by  class in $H^2(K,H_2(K))$.
We can then induce a central extension of $K$ by $U(1)$ using the homomorphism $\phi_c$, and central extensions by $U(1)$ are classified by $H^3(K,\ZZ)$.
All of these constructions are induced by homomorphisms, so all up we have an injective homomorphism 
\[
\Ext(H_2(K),\ZZ)\simeq \Hom(H_2(K),U(1)) \into H^2(K,U(1)) \xrightarrow{\simeq} H^3(K,\ZZ) \to H^3(BG,\ZZ),
\]
and so non-torsion elements in $\Ext(H_2(K),\ZZ)$ are a source of non-torsion elements in $H^3(BG,\ZZ)$.

Now the $E_3$ page of the spectral sequence in Figure~\ref{fig:E2} has a potentially nontrivial differential $d_3^{0,2}\colon \cH^2(BG_0)^K\to H^3(K,\ZZ)$.
Moreover $\im(d_3^{0,2})$ is the kernel of the edge homomorphism $H^3(K,\ZZ) \to H^3(BG,\ZZ)$, since $H^3(K,\ZZ)/\im(d_3^{0,2}) \simeq E_\infty^{3,0} \subset H^3(BG,\ZZ)$.
Without more specific analysis of the differential $d_3^{0,2}$, though, it is not immediately obvious that a non-torsion element remains non-torsion in $H^3(BG,\ZZ)$, in general.

However, some simple examples can be calculated directly, for instance:

\begin{example}
Take $K$ some locally finite group with a surjection $\alpha\colon K\to \{\pm1\}$, and define $G = U(1)\rtimes K$ as in Example~\ref{ex:nontrivial_action_of_component_grp}, so that $\cH^2(BU(1))^K=\{0\}$, and so $\Ext(H_2(K),\ZZ)$ injects into $H^3(BG)$.
\end{example}

We can even give a concrete example of a locally finite group $K$ where $\Ext(H_2(K),\ZZ)$ has lots of non-torsion elements, giving non-torsion elements in $H^3(U(1)\rtimes K)$.
This is despite $\pi_0(U(1)\rtimes K)$ being purely torsion, in contrast to $U(1)\times \ZZ$ in Example~\ref{ex:cup_product}.

\begin{example}
Define our locally finite group to be 
\[
  K := \bigoplus_{p\text{ prime}} (\ZZ/p)^2
\]
which is the colimit of the groups $K_p:=(\ZZ/2)^2\oplus (\ZZ/3)^2\oplus \cdots \oplus (\ZZ/p)^2$.
It is a classical result that $H_2((\ZZ/p)^2,\ZZ)\simeq \ZZ/p$, and we get\footnote{This and the `classical result' follow from the formula $H_2(G\times H,\ZZ)\simeq H_2(G,\ZZ)\oplus H_2(H,\ZZ)\oplus (G^{ab}\otimes H^{ab})$ and the fact cyclic groups have vanishing $H_2$.}
\[
  H_2(K,\ZZ) \simeq \bigoplus_{p\text{ prime}} \ZZ/p.
\]
Since $\Ext$ sends direct sums to direct products, we have that
\[
  \Ext(H_2(K,\ZZ),\ZZ) \simeq  \prod_{p\text{ prime}}\ZZ/p,
\]
and so $H^3(K,\ZZ)$ and hence $H^3(U(1)\rtimes K,\ZZ)$ has plenty of non-torsion elements.

For the sake of concreteness, one can take the element $(1,0;1,0;1,0;\ldots)\in \Ext(H_2(K),\ZZ)$ and from this induce a central extension
\[
  U(1) \to \mathcal{T} \to U(1)\rtimes K
\]
with non-torsion characteristic class in $H^3(B(U(1)\rtimes K),\ZZ)$, and then this class is the $DD$-class of a lifting bundle gerbe following Lemma~\ref{lemma:DD-facts}.\ref{lemma:universal_lifting_gerbe}.
\end{example}

As a result of this example, we can see that the assumption of torsion $\Ext(H_2(K),\ZZ)$ in Corollary~\ref{cor:loc_finite_group_torsion_Ext} is necessary if one wants to make a statement about all possible finite-dimensional Lie groups with group of connected components $K$.

\subsection{Applications}

These results give strong obstructions to finding finite-dimensional lifting bundle gerbes.
Namely, take a manifold or space  $X$ satisfying the hypotheses of Theorem~\ref{thm:main_thm_sharp} with $H^3(X,\ZZ)$ torsion-free. 
Then all finite-dimensional lifting gerbes on $X$ have a stable trivialisation.

An example of particular focus in the literature considers a generic connected, compact simple Lie group, not necessarily simply-connected, which always has nontrivial third integral cohomology.

\begin{prop}\label{prop:simple_Lie_grp_lift_gerbe}
The only connected, compact simple Lie groups $G$ that admit a nontrivial lifting bundle gerbe are the groups $PSO(4n)$ for $n>1$, and in this case there is precisely one, up to stable isomorphism\footnote{It is enough to know that stable isomorphism is equivalent to having the same $DD$-class}.
\end{prop}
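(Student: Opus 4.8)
The plan is to convert the statement into a computation of the torsion subgroup of $H^3(G,\ZZ)$. Since $G$ is connected, compact and simple, $\pi_1(G)$ is finite, so by Theorem~\ref{thm:main_thm_easy} every finite-dimensional lifting bundle gerbe on $G$ is torsion; conversely, by the result of Serre--Grothendieck quoted in the introduction, every torsion class in $H^3(G,\ZZ)$ is the $DD$-class of a finite-dimensional lifting bundle gerbe, for instance one built from $U(1)\to U(k)\to PU(k)$. Because stable isomorphism classes are detected exactly by the $DD$-class, nontrivial finite-dimensional lifting gerbes on $G$, up to stable isomorphism, correspond bijectively to the nonzero elements of $H^3(G,\ZZ)_\tors$. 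So it suffices to show that $H^3(G,\ZZ)_\tors$ is nonzero precisely for $G=PSO(4n)$ with $n>1$, and that it is then $\ZZ/2$.

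Next I would compute this torsion subgroup in terms of $\pi_1(G)$. By the universal coefficient theorem $H^3(G,\ZZ)_\tors\simeq \Ext(H_2(G,\ZZ),\ZZ)\simeq H_2(G,\ZZ)_\tors$, so the task becomes computing $H_2(G,\ZZ)$. Writing $Z:=\pi_1(G)$, the universal cover gives a fibration $\tilde G\to G\to BZ$ with $\tilde G$ simply-connected; since $\pi_2$ of any Lie group vanishes, $H_1(\tilde G)=H_2(\tilde G)=0$, while $H_2(BZ,\ZZ)\simeq\Lambda^2 Z$ (the Schur multiplier of the finite abelian group $Z$). Feeding this into the homology Serre spectral sequence, the only contribution in total degree $2$ is $E^\infty_{2,0}$, and for degree reasons no differential can enter or leave it in this range, so $H_2(G,\ZZ)\simeq \Lambda^2 Z=\Lambda^2\pi_1(G)$. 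As $\pi_1(G)$ is finite this group is finite, whence $H^3(G,\ZZ)_\tors\simeq \Lambda^2\pi_1(G)$.

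Finally I would run the classification. For a finite abelian group, $\Lambda^2$ is nonzero if and only if the group is non-cyclic. Since $G$ is simple, $\pi_1(G)$ is a subgroup of the centre $Z(\tilde G)$ of the simply-connected cover, with $G$ the adjoint group exactly when $\pi_1(G)=Z(\tilde G)$. Running through the centres of the simply-connected compact simple groups---$\ZZ/(n+1)$ for $A_n$, $\ZZ/2$ for $B_n$, $C_n$ and $E_7$, $\ZZ/3$ for $E_6$, trivial for $G_2$, $F_4$, $E_8$, and $Z(\Spin(2m))$ equal to $\ZZ/4$ for $m$ odd and $\ZZ/2\oplus\ZZ/2$ for $m$ even---the unique non-cyclic case is $Z(\Spin(2m))\simeq\ZZ/2\oplus\ZZ/2$ with $m$ even. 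A non-cyclic $\pi_1(G)$ therefore forces $\pi_1(G)$ to be this whole group, i.e.\ $G$ is the adjoint group $PSO(2m)$; writing $2m=4n$ gives $G=PSO(4n)$ of type $D_{2n}$, which is simple exactly for $n>1$. In this case $\Lambda^2(\ZZ/2\oplus\ZZ/2)\simeq\ZZ/2$, giving exactly one nonzero $DD$-class and hence precisely one nontrivial lifting gerbe up to stable isomorphism.

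The main obstacle is the identification $H_2(G,\ZZ)\simeq\Lambda^2\pi_1(G)$: one must check that the relevant spectral-sequence differentials vanish in this low range (equivalently, that $\pi_1(G)$ acts trivially on $H_*(\tilde G)$, which holds because left translations on a connected group are homotopic to the identity), and that the Schur multiplier of a finite abelian group is its exterior square. The remaining work---the universal-coefficient reductions and the case check over Dynkin types, noting that every proper subgroup of $\ZZ/2\oplus\ZZ/2$ is cyclic so that only the adjoint quotient survives---is routine once this is in place.
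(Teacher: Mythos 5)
Your proof is correct, and it reaches the result by a genuinely different route in two places. First, where the paper simply quotes the known computations $H^3(G,\ZZ)\simeq\ZZ$ for $G\neq PSO(4n)$ and $H^3(PSO(4n),\ZZ)\simeq\ZZ\oplus\ZZ/2$, you derive the torsion subgroup from scratch as $H^3(G,\ZZ)_{\tors}\simeq\Ext(H_2(G,\ZZ),\ZZ)\simeq\Lambda^2\pi_1(G)$ via the homology Serre spectral sequence of $\tilde G\to G\to B\pi_1(G)$, and then run through the centres of the simply-connected types; this is self-contained and makes transparent \emph{why} $PSO(4n)$ is the exceptional case (it is the unique simple group whose fundamental group can be non-cyclic). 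Second, for the realization of the nonzero torsion class you invoke the Serre--Grothendieck theorem to produce \emph{some} lifting gerbe from a $PU(k)$-bundle, whereas the paper exhibits an explicit one: the $(\ZZ/2\times\ZZ/2)$-bundle $\Spin(4n)\to PSO(4n)$ together with a concrete $U(1)$-central extension of $\ZZ/2\times\ZZ/2$ given by an explicit $2$-cocycle. Your existence argument is perfectly valid (and the groups here are finite CW complexes, so Serre's theorem applies), but the paper's construction buys more: a concrete low-dimensional model, without the nontrivial problem---flagged in the paper's introduction---of determining which ranks $k$ actually realize a given torsion element. The one point worth making explicit in your write-up is the justification that $E^2_{2,0}=H_2(B\pi_1(G),\ZZ)$ carries untwisted coefficients (immediate, since the coefficients there are $H_0(\tilde G)$) and that $H_2$ of a finite abelian group is its exterior square; both are standard, as you note.
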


\begin{proof}
First recall that for a Lie group as in the proposition, the fundamental group is finite, so we can apply Theorem~\ref{thm:main_thm_easy}.
Then for all $G\neq PSO(4n)$, $H^3(G,\ZZ)\simeq \ZZ$, hence any lifting gerbe must be trivial, and for $PSO(4)\simeq SO(3)\times SO(3)$, $H^3\simeq \ZZ\oplus \ZZ$, with the same conclusion.

For $H=PSO(4n)$ with $n>1$, as $H^3(PSO(4n),\ZZ)\simeq \ZZ\oplus \ZZ/2$, there is only one nontrivial torsion class, and this \emph{can} be can realised by a lifting gerbe, following \cite[Remark 5.1]{Krepski}.
Namely, there is a nontrivial central extension
\[
  U(1) \to \widehat{Z}\to \ZZ/2\times \ZZ/2 =: Z
\]
where the underlying manifold of $\widehat{Z}$ is $U(1)\times \ZZ/2\times \ZZ/2$, with multiplication arising from the nontrivial 2-cocycle $\beta \colon Z\times Z\to U(1)$ with $\beta((n_1,n_2),(m_1,m_2)) = (-1)^{n_1+m_2}$ \cite{Gawedzki-Waldorf}.
Then there is a nontrivial lifting bundle gerbe arising to the $(\ZZ/2\times \ZZ/2)$-bundle $\Spin(4n)\to PSO(4n)$, and this has torsion $DD$-class in $H^3(PSO(4n),\ZZ)$.
\end{proof}

The following corollary is immediate, as a basic gerbe on a Lie group is, by definition, a non-torsion generator of $H^3(G,\ZZ)$ \cite{Gawedzki-Reis}.

\begin{cor}
No basic gerbe on a connected, compact simple Lie group $G$, not necessarily simply-connected, can be constructed as a lifting bundle gerbe.
\end{cor}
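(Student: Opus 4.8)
The plan is to deduce the statement directly from Theorem~\ref{thm:main_thm_easy} together with the definition of a basic gerbe, the whole argument being a comparison of $DD$-classes. The only two inputs I need are, first, that a connected, compact, simple Lie group $G$ has finite fundamental group, and second, that a basic gerbe is by definition a gerbe whose $DD$-class is a non-torsion generator of $H^3(G,\ZZ)$, following \cite{Gawedzki-Reis}.

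First I would observe that, since $\pi_1(G)$ is finite, the hypotheses of Theorem~\ref{thm:main_thm_easy} are met with the base space taken to be $G$ itself. That theorem then applies to \emph{every} lifting bundle gerbe on $G$---built from any finite-dimensional central extension and any principal bundle over $G$---and forces each such gerbe to be torsion, that is, to have torsion $DD$-class in $H^3(G,\ZZ)$. This is exactly the step already used in the proof of Proposition~\ref{prop:simple_Lie_grp_lift_gerbe}.

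The argument then closes by comparison: a basic gerbe has, by definition, a non-torsion $DD$-class, whereas any lifting bundle gerbe on $G$ has torsion $DD$-class by the previous step. Since the $DD$-class is an invariant of stable isomorphism (as recorded in Lemma~\ref{lemma:DD-facts}), a gerbe with non-torsion class cannot be stably isomorphic to---and in particular cannot be realised as---a lifting bundle gerbe. Hence no basic gerbe arises this way.

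I do not expect any genuine obstacle: the corollary is immediate once Theorem~\ref{thm:main_thm_easy} is available, and the single point meriting a word is that $H^3(G,\ZZ)$ has nontrivial free part, so that the phrase \emph{non-torsion generator} is meaningful. This is classical for connected compact simple $G$ and is in any case the content exhibited in the proof of Proposition~\ref{prop:simple_Lie_grp_lift_gerbe}, where $H^3(G,\ZZ)$ is computed to be $\ZZ$, $\ZZ\oplus\ZZ$, or $\ZZ\oplus\ZZ/2$ in the relevant cases.
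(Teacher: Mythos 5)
Your argument is correct and is essentially the paper's own: the paper treats the corollary as immediate from the fact that every lifting gerbe on such a $G$ is torsion (via Theorem~\ref{thm:main_thm_easy}, as invoked in the proof of Proposition~\ref{prop:simple_Lie_grp_lift_gerbe}) while a basic gerbe has non-torsion $DD$-class by definition. Nothing further is needed.
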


\begin{rem}
Note however, that the pullback of a basic gerbe can be a finite-dimensional lifting bundle gerbe.
For example, the basic gerbe on $SU(2)$ can be constructed via a carefully constructed submersion, and is not a lifting gerbe.
Its pullback along the Weyl map $U(1)\times SU(2)/U(1) \to SU(2)$ (where $U(1)\subset SU(2)$ is the diagonal matrices), however, is stably isomorphic to the Weyl bundle gerbe \cite{Becker-Murray-Stevenson}, which \emph{is} a lifting bundle gerbe.
\end{rem}

This theorem as algebraic consequences, as well.
Recall that a \emph{crossed module} of Lie groups consists of a homomorphism $t\colon \hat{K} \to L$ of Lie groups such that $\ker(t) \to \hat{K} \to t(\hat{K}) =: K$ is a central extension, $K$ is a normal closed subgroup of $L$, and a lift of the adjoint action of $L$ on $K$ to $\hat{K}$, such that $\hat{K}\to L\to \Aut(\hat{K})$ agrees with the adjoint action of $\hat{K}$ on itself.
Moreover, $L\to \coker(t)$ is a principal $K$-bundle, and we can consider the corresponding lifting bundle gerbe.

\begin{cor}\label{cor:no-go_for_mult_gerbes}
Let $G$ be a finite-dimensional connected, compact simple Lie group.
Then there is no finite-dimensional crossed module $t\colon \hat{K}\to L$ of Lie groups with $\ker(t)\simeq U(1)$ and $\coker(t)\simeq G$ whose associated lifting bundle gerbe is nontrivial.
More generally, no multiplicative bundle gerbe on $G$ can be stably isomorphic to a finite-dimensional lifting bundle gerbe with non-torsion $DD$-class.
\end{cor}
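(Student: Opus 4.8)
The plan is to treat the two assertions separately, reducing each to machinery already established. The second, more general, assertion is almost immediate: a connected, compact, simple Lie group $G$ has finite fundamental group, so Theorem~\ref{thm:main_thm_easy} shows that every finite-dimensional lifting gerbe on $G$ has torsion $DD$-class. Since a stable isomorphism preserves the $DD$-class (Lemma~\ref{lemma:DD-facts}), any bundle gerbe—multiplicative or not—that is stably isomorphic to a finite-dimensional lifting gerbe on $G$ must have torsion $DD$-class, and in particular cannot have a non-torsion one. I would spend no effort on the multiplicative structure here; it merely fixes the context.

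For the first assertion I would first reduce to a single case. The lifting gerbe of the crossed module is, by construction, the lifting gerbe of the principal $K$-bundle $L\to\coker(t)=G$ together with the central extension $U(1)\to\hat K\to K$, where $K=t(\hat K)$, so its $DD$-class is torsion by Theorem~\ref{thm:main_thm_easy}. By the cohomology computations in the proof of Proposition~\ref{prop:simple_Lie_grp_lift_gerbe}, $H^3(G,\ZZ)$ is torsion-free whenever $G\neq PSO(4n)$, and then the class vanishes; the only surviving possibility is $G=PSO(4n)$ carrying the order-two class in $H^3(PSO(4n),\ZZ)\simeq\ZZ\oplus\ZZ/2$.

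To kill this last case I would exploit the crossed-module (Peiffer) axioms, which the proof of Proposition~\ref{prop:simple_Lie_grp_lift_gerbe} never uses. After reducing, via the uniqueness clause of that proposition, to the model with $K=\ZZ/2\times\ZZ/2$ and the Heisenberg central extension $\widehat Z$ of \cite{Gawedzki-Waldorf}, I would split on whether $\hat K$ is abelian. If $\hat K$ is abelian, its class in $H^2(\ZZ/2\times\ZZ/2,U(1))\simeq\ZZ/2$ is zero, so the extension class $\alpha\in H^3(BK,\ZZ)$ vanishes and the gerbe is trivial. If $\hat K$ is nonabelian, the Peiffer identity forces the self-conjugation $\hat K\to\mathrm{Inn}(\hat K)$ to factor as $\hat K\xrightarrow{t}L\xrightarrow{a}\Aut(\hat K)$ through the structural $L$-action $a$; but $\Aut(\widehat Z)$ is a discrete group, so $a$ factors through $\pi_0(L)$, and since $\coker(t)=G$ is connected the map $\pi_0(K)\to\pi_0(L)$ is onto. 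Requiring the composite to hit all of $\mathrm{Inn}(\widehat Z)\simeq K$ then forces $K\to\pi_0(L)$ to be injective, hence an isomorphism, which makes the monodromy $\pi_1(G)\to\pi_0(K)$ trivial and the $K$-bundle $L\to G$ trivial—so again the gerbe is trivial. Either branch contradicts nontriviality.

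The hard part is this $PSO(4n)$ case, and within it the reduction to the Heisenberg model. One cannot dismiss the torsion class by a transgression or primitivity argument: because $H^1(PSO(4n),\ZZ)=0$ its $DD$-class is automatically primitive, and torsion classes in $H^3(G)$ can in principle be transgressive, so a crossed-module gerbe realising it could a priori be multiplicative. The genuine obstruction must therefore come from the rigidity encoded in the Peiffer identity, and the delicate bookkeeping is to show that the clean abelian/nonabelian dichotomy above, transparent when $K$ is finite, persists for an arbitrary closed normal $K\trianglelefteq L$ realising the same $DD$-class—most plausibly by analysing the induced map $\coker(t)=G\to\mathrm{Out}(\hat K)$ and using connectivity of $G$ to constrain the $L$-action.
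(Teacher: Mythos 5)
Your handling of the second assertion, and your reduction of the first assertion to the single order-two class on $PSO(4n)$, $n>1$, both match the paper (the paper derives the second assertion from the classification of multiplicative gerbes by $H^4(BG,\ZZ)$, but your route via Theorem~\ref{thm:main_thm_easy} and invariance of the $DD$-class under stable isomorphism is also valid). The gap is in your treatment of the remaining $PSO(4n)$ case, and you half-acknowledge it in your final paragraph: the ``uniqueness clause'' of Proposition~\ref{prop:simple_Lie_grp_lift_gerbe} is uniqueness of the \emph{stable isomorphism class}, i.e.\ of the $DD$-class, and this does not let you replace an arbitrary crossed module $t\colon\hat K\to L$ with $\coker(t)\simeq PSO(4n)$ by the Heisenberg model $\widehat Z\to(\ZZ/2)^2$. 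The subgroup $K=t(\hat K)\trianglelefteq L$ can be any closed normal subgroup --- positive-dimensional, with arbitrary component group --- and none of $K$, $\hat K$, $L$ is determined by the $DD$-class; as the introduction stresses, gerbes with the same class may look wildly different. Your abelian/nonabelian dichotomy and the Peiffer/$\Aut(\widehat Z)$-discreteness argument are therefore only checked for one presentation, and you supply no mechanism for the general case, so the proof does not close.

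Moreover, you explicitly reject the mechanism that actually works. The paper's proof of the $PSO(4n)$ case \emph{is} a transgression argument: a crossed-module lifting gerbe is multiplicative \cite{CJMSW}, the $DD$-class of a multiplicative gerbe on $G$ lies in the image of the transgression $H^4(BG,\ZZ)\to H^3(G,\ZZ)$, and by \cite[Theorem 6]{Henriques_17} the restriction $H^4(BG,\ZZ)\to H^4(BT,\ZZ)$ is injective, so $H^4(BG,\ZZ)$ is torsion-free and its transgression image cannot be the nontrivial $2$-torsion class in $H^3(PSO(4n),\ZZ)\simeq\ZZ\oplus\ZZ/2$ (compare with pullback to $\Spin(4n)$). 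Your remark that ``torsion classes in $H^3(G)$ can in principle be transgressive'' is exactly the point at which you discard the correct approach: for compact connected $G$ this is what Henriques' theorem rules out, and it is the one genuinely new input the corollary needs beyond Proposition~\ref{prop:simple_Lie_grp_lift_gerbe}.
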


\begin{proof}
By Proposition~\ref{prop:simple_Lie_grp_lift_gerbe} if $\hat{K}\to L$ is a crossed module with $\coker(t)=G$, then the corresponding lifting bundle gerbe must be torsion, hence we can consider just the case of $G=PSO(4n)$, $n>1$.

The lifting bundle gerbe arising from a crossed module $\hat{K}\to L$ is \emph{multiplicative} \cite{CJMSW}, as it fits into a strict 2-group extension corresponding to the extension of crossed modules
\begin{equation*}\label{eq:xmod_ext}
  \xymatrix@R=3ex@C=9ex{
    U(1) \ar[d] \ar[r] & \hat{K} \ar[d] \ar[r] & 1 \ar[d]\\
    1 \ar[r] & L \ar[r] & \coker(t)
  }
\end{equation*}
of $\coker(t)$ by $\mathbf{B}U(1)$ (as strict Lie 2-groups).
By \cite[Proposition 5.2]{CJMSW}, the $DD$-class of a multiplicative bundle gerbe on $G$ is in the image of the transgression map $H^4(BG,\mathbb{Z})\to H^3(G,\mathbb{Z})$. 
But by \cite[Theorem 6]{Henriques_17}, for any connected compact Lie group $G$, the restriction map $H^4(BG,\mathbb{Z}) \to H^4(BT,\mathbb{Z})$ induced by a maximal torus $T\subset G$ is injective, hence $H^4(BG,\mathbb{Z})$ is torsion-free, including for $G=PSO(4n)$. 
Thus the only class in $H^3(PSO(4n),\mathbb{Z})$ represented by some lifting gerbe is not the $DD$-class of the lifting gerbe arising from a crossed module.

The more general statement follows as multiplicative bundle gerbes on $G$ are classified by $H^4(BG,\ZZ)$ \cite[Proposition 5.2]{CJMSW} (see also \cite{Gawedzki-Waldorf}, where the image of the injective map $H^4(BG,\ZZ)\to H^4(B\tilde{G},\ZZ)$ is characterised for all possible compact connected simple Lie groups).
\end{proof}

This tells us that any finite-dimensional crossed module $t\colon \hat{K}\to L$ as in Corollary~\ref{cor:no-go_for_mult_gerbes}, there is a principal $U(1)$-bundle $T\to L$ whose restriction to $K \subset L$ is $\hat{K}\to K$, and there is a $\hat{K}$-action of $T$ covering the action on $L$ by multiplication via $t$.
Note that this corollary places a constraint on the structure of a crossed module analogous to how one can have group extensions that are topologically trivial, but still algebraically nontrivial---there may still be algebraically nontrivial crossed modules with cokernel $G$.

As Corollary~\ref{cor:no-go_for_mult_gerbes} constrains the structure of finite-dimensional crossed modules, it means that higher geometry of principal 2-bundles with strict structure 2-group (i.e.\ a crossed module), as in \cite{Waldorf_18} for example, really must use infinite-dimensional constructions.
A connection for such a bundle takes its values in the truncated $L_\infty$-algebra that is the crossed module of Lie algebras associated to the given crossed module of Lie groups \cite[Definition 5.1.1]{Waldorf_18}.
As a result of the preceeding corollary, finite-dimensional crossed modules $\tau\colon \hat{\mathfrak{k}} \to \mathfrak{l}$ of Lie algebras where $\coker(\tau)$ is simple are insufficient to capture all examples of interest.

Finally, recall that twisted $K$-theory is a particular cohomology theory generalising topological $K$-theory, where one has an extra piece of data, a \emph{twist}; homotopy-theoretically this is a map to the classifying space for bundles of spectra with fibre the $K$-theory spectrum.
The most well-studied twists\footnote{There are other twists, which arise from bundles of certain self-absorbing $C^*$-algebras} arise from a factor of $K(\ZZ,3)$ of this classifying space, hence when constructing $K$-theory using geometric objects, geometric objects classified by maps to $K(\ZZ,3)$ are used. 
As such, bundle gerbes are one model for twists, as are principal $PU(\cH)$-bundles.
The latter give rise to lifting bundle gerbes, and as noted in the introduction, every bundle gerbe is (non-canonically) the lifting gerbe of some $PU(\cH)$-bundle.
But finite-dimensional bundle gerbes coming from $PU(n)$-bundles can be used as well, and more generally, the lifting bundle gerbes of principal $G$-bundles for other groups $G$.

\begin{cor}
Given a space $X$ as in Theorem~\ref{thm:main_thm_sharp}, a finite-dimensional $G$-bundle $P$ and central extension $\widehat{G}$, every twist $\tau=\tau(P,\widehat{G})$ of the $K$-theory of $X$ coming from this data is a torsion twist.
In particular if $H^3(X,\ZZ)$ is torsion-free, the resulting twisted $K$-theory $K^{\ast,\tau}(X)$ is just ordinary $K$-theory.
\end{cor}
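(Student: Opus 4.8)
The plan is to reduce the statement to the torsion-ness of the underlying $DD$-class, which is exactly what Theorem~\ref{thm:main_thm_sharp} provides. First I would recall the homotopy-theoretic description of the relevant twists: a twist arising from the $K(\ZZ,3)$-factor of the classifying space for bundles of $K$-theory spectra is precisely a homotopy class of maps $X\to K(\ZZ,3)$, that is, an element of $H^3(X,\ZZ)$. The twist $\tau(P,\widehat{G})$ built from the central extension and the principal bundle factors through this $K(\ZZ,3)$-factor, and the corresponding class in $H^3(X,\ZZ)$ is, by construction, the $DD$-class of the associated lifting bundle gerbe $(P\times\widehat{G},P)$---this is the content of identifying the gerbe model of twists with the homotopy-theoretic one.

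Second, I would invoke Theorem~\ref{thm:main_thm_sharp}: since $X$, $G$, and $\widehat{G}$ satisfy its hypotheses, the lifting bundle gerbe is torsion, so its $DD$-class is a torsion element of $H^3(X,\ZZ)$. By the identification above, the twist $\tau(P,\widehat{G})$ is therefore classified by a torsion class, which is precisely the statement that $\tau$ is a torsion twist.

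For the final assertion, I would observe that if $H^3(X,\ZZ)$ is torsion-free then the only torsion element is $0$; hence the classifying class of $\tau$ vanishes, the classifying map $X\to K(\ZZ,3)$ is null-homotopic, and the twist is trivial. Twisted $K$-theory with the trivial twist agrees with ordinary (untwisted) $K$-theory, giving $K^{\ast,\tau}(X)\simeq K^\ast(X)$.

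The only real subtlety---and where I expect the bookkeeping to concentrate---is the first step: making precise that the geometric twist coming from $(P,\widehat{G})$ lands in the $K(\ZZ,3)$-factor and is classified by the gerbe's $DD$-class, rather than one of the other (for instance $C^*$-algebraic) twists mentioned above. Once that identification is in place, everything else is formal, relying only on Theorem~\ref{thm:main_thm_sharp} together with the triviality of torsion in a torsion-free group.
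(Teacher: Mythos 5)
Your proposal is correct and matches the paper's (implicit) argument: the paper treats this corollary as immediate from the identification of the twist with the $DD$-class of the lifting bundle gerbe in $H^3(X,\ZZ)$, followed by an application of Theorem~\ref{thm:main_thm_sharp} and the observation that a torsion class in a torsion-free group is zero. No further comment is needed.
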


We can view this result as putting strong constraints on the maps
\[
  H^1(X,G)\times H^3(BG,\ZZ) \to \{\text{twists of $K$-theory on $X$}\}.
\]
For instance in the case that $\pi_1(X)$ is locally finite and $\Ext(H^2(\pi_1(X)),\ZZ)$ is torsion, bundles for arbitrary finite-dimensional Lie groups give \emph{only} torsion twists.

\printbibliography

\end{document}